\begin{document}
\def\eq#1{{\rm(\ref{#1})}}
\theoremstyle{plain}
\newtheorem{thm}{Theorem}[section]
\newtheorem{lem}[thm]{Lemma}
\newtheorem{prop}[thm]{Proposition}
\newtheorem{cor}[thm]{Corollary}
\theoremstyle{definition}
\newtheorem{dfn}[thm]{Definition}
\newtheorem*{rem}{Remark}
\def\coker{\mathop{\rm coker}}
\def\ind{\mathop{\rm ind}}
\def\Re{\mathop{\rm Re}}
\def\vol{\mathop{\rm vol}}
\def\SO{\mathbin{\rm SO}}
\def\Im{\mathop{\rm Im}}
\def\min{\mathop{\rm min}}
\def\Spec{\mathop{\rm Spec}\nolimits}
\def\Hol{{\textstyle\mathop{\rm Hol}}}
\def\ge{\geqslant}
\def\le{\leqslant}
\def\C{{\mathbin{\mathbb C}}}
\def\R{{\mathbin{\mathbb R}}}
\def\N{{\mathbin{\mathbb N}}}
\def\Z{{\mathbin{\mathbb Z}}}
\def\D{{\mathbin{\mathcal D}}}
\def\H{{\mathbin{\mathcal H}}}
\def\M{{\mathbin{\mathcal M}}}
\def\al{\alpha}
\def\be{\beta}
\def\ga{\gamma}
\def\de{\delta}
\def\ep{\epsilon}
\def\io{\iota}
\def\ka{\kappa}
\def\la{\lambda}
\def\ze{\zeta}
\def\th{\theta}
\def\vp{\varphi}
\def\si{\sigma}
\def\up{\upsilon}
\def\om{\omega}
\def\De{\Delta}
\def\Ga{\Gamma}
\def\Th{\Theta}
\def\La{\Lambda}
\def\Om{\Omega}
\def\ts{\textstyle}
\def\sst{\scriptscriptstyle}
\def\sm{\setminus}
\def\na{\nabla}
\def\pd{\partial}
\def\op{\oplus}
\def\ot{\otimes}
\def\bigop{\bigoplus}
\def\iy{\infty}
\def\ra{\rightarrow}
\def\longra{\longrightarrow}
\def\dashra{\dashrightarrow}
\def\t{\times}
\def\w{\wedge}
\def\bigw{\bigwedge}
\def\d{{\rm d}}
\def\bs{\boldsymbol}
\def\ci{\circ}
\def\ti{\tilde}
\def\ov{\overline}
\def\md#1{\vert #1 \vert}
\def\nm#1{\Vert #1 \Vert}
\def\bmd#1{\big\vert #1 \big\vert}
\def\cnm#1#2{\Vert #1 \Vert_{C^{#2}}} 
\def\lnm#1#2{\Vert #1 \Vert_{L^{#2}}} 
\def\bnm#1{\bigl\Vert #1 \bigr\Vert}
\def\bcnm#1#2{\bigl\Vert #1 \bigr\Vert_{C^{#2}}} 
\def\blnm#1#2{\bigl\Vert #1 \bigr\Vert_{L^{#2}}} 
\title[Deformations of AC Special Lagrangian Submanifolds]{Deformations of
Asymptotically Cylindrical Special Lagrangian Submanifolds with
Fixed Boundary}

\author[Sema Salur and A. J. Todd]{Sema Salur and A. J. Todd}

\address {Department of Mathematics, University of Rochester, Rochester, NY, 14627}
\email{salur@math.rochester.edu }

\address {Department of Mathematics, University of Rochester,
Rochester, NY, 14627}
\email{ajtodd@math.rochester.edu }

\begin{abstract}
Given an asymptotically cylindrical special Lagrangian submanifold $L$ in an asymptotically cylindrical Calabi-Yau $3$-fold $X$, we determine conditions on a decay rate $\ga$ which make the moduli space of (local) special Lagrangian deformations of $L$ in $X$ a smooth manifold and show that it has dimension equal to the dimension of the image of $H^1_{cs}(L,\R)$ in $H^1(L,\R)$ under the natural inclusion map, $[\chi]\mapsto[\chi]$.
\end{abstract}

\date{}
\maketitle
\section{Introduction}
\label{sl1}

McLean \cite{McLe} proved that the moduli space of special Lagrangian deformations of a compact special Lagrangian submanifold $L$ of a Calabi-Yau $n$-fold is a smooth manifold of dimension $b^1(L)$, the first Betti number of $L$. The goal of this paper is to prove the following result for an asymptotically cylindrical special Lagrangian submanifold of an asymptotically cylindrical Calabi-Yau $3$-fold:

\begin{thm} Assume that $(X,\om,\Om,g)$ is an asymptotically cylindrical Calabi-Yau $3$-fold, asymptotic to $M\times S^1\times(R,\infty)$ with decay rate $\al<0$, where $M$ is a compact, connected $K3$ surface, and that $L$ is an asymptotically cylindrical special Lagrangian $3$-submanifold in $X$ asymptotic to $N\times\{p\}\times (R',\infty)$ for $R'>R$ with decay rate $\be$ for $\al\leq\be<0$ where $N$ is a compact special Lagrangian $2$-fold in $M$ and $p\in S^1$.

If $\ga<0$ is such that $\be<\ga$ and $(0,\ga^2]$ contains no eigenvalues of the Laplacian on functions on $N$ nor eigenvalues of the Laplacian on exact $1$-forms on $N$, then the moduli space $\M_L^\ga$ of asymptotically cylindrical special Lagrangian submanifolds in $X$ near $L$, and asymptotic to $N\times\{p\}\times(R',\infty)$ with decay rate $\ga$, is a smooth manifold of dimension $\dim V$ where $V$ is the image of the natural inclusion map of $H^1_{cs}(L,\R)\hookrightarrow H^1(L,\R)$.
\label{sl1thm}
\end{thm}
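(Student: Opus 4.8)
The plan is to follow McLean's strategy \cite{McLe} for the compact case, but carried out in weighted H\"older spaces adapted to the cylindrical end. First I would invoke an asymptotically cylindrical version of Weinstein's Lagrangian neighbourhood theorem to identify ACyl submanifolds of $X$ near $L$, asymptotic to $N\t\{p\}\t(R',\iy)$ with rate $\ga$, with small sections $v$ of $T^*L$ lying in a weighted H\"older space $C^{k+1,\al}_\ga(T^*L)$. Under this identification the graph $\Ga_v$ satisfies $v^*\om=\d v$, so $\Ga_v$ is Lagrangian iff $\d v=0$; and for closed $v$ one has $v^*\Im\Om=\d\si(v)$ for a $2$-form $\si(v)$ depending smoothly on $v$, with $\si(0)=0$ and $\si(v)={*}v+O(\md v^2)$. (Exactness of $v^*\Im\Om$ as a rate-$\ga$ form follows because $\Ga_v$ is homologous to $L$ relative to the cylindrical end and $\Im\Om$ is closed with $\int_L\Im\Om=0$.) Thus $\M_L^\ga$ is locally identified with the zero set of $F:v\mapsto\d\si(v)$ on a neighbourhood of $0$ in the Banach space of closed $1$-forms in $C^{k+1,\al}_\ga(T^*L)$, with values in the Banach space of exact $3$-forms of rate $\ga$.

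Next I would analyse the linearisation $\d F_0(v)=\d({*}v)$, i.e.\ $\d^*$ acting on closed $1$-forms (identifying $3$-forms with functions). On the cylinder $N\t\R$ this is asymptotic to a translation-invariant elliptic operator whose indicial roots are $\pm\sqrt{\mu}$ as $\mu$ runs over the eigenvalues of the Laplacian on functions and on exact $1$-forms on $N$. The hypothesis that $(0,\ga^2]$ contains no such eigenvalue says exactly that $\ga$ is not a critical rate and that there is no critical rate in $(\ga,0)$. By the Lockhart--McOwen Fredholm theory for asymptotically translation-invariant elliptic operators, $\d+\d^*$ on $1$-forms -- hence $\d F_0$ -- is Fredholm on the rate-$\ga$ spaces, and its kernel is the space of $\ga$-decaying harmonic $1$-forms, which by the absence of critical rates in $(\ga,0)$ coincides with the space $\H$ of $L^2$ harmonic $1$-forms on $L$. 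By the standard identification of $L^2$ harmonic forms on a cylindrical-end manifold with topology, $\H\cong\Im\bigl(H^1_{cs}(L,\R)\ra H^1(L,\R)\bigr)=V$, so $\ker\d F_0\cong V$.

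It remains to show $\d F_0$ surjects onto the exact $3$-forms of rate $\ga$, equivalently that $\d\colon\{\text{coexact }2\text{-forms of rate }\ga\}\ra\{\text{exact }3\text{-forms of rate }\ga\}$ is onto. This follows from weighted Hodge theory on $L$: for our non-critical $\ga$ every rate-$\ga$ $3$-form splits as harmonic plus exact, and an exact one has a coexact rate-$\ga$ primitive obtained from $\d^*\De^{-1}$, using that $\De$ is invertible on the relevant rate-$\ga$ spaces -- here both eigenvalue conditions on $N$ are needed. With $\d F_0$ Fredholm, surjective, and with kernel $V$, the Banach-space implicit function theorem shows $F^{-1}(0)$ near $0$ is a smooth manifold of dimension $\dim V$, which the tubular-neighbourhood identification transports to $\M_L^\ga$.

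The main obstacle is the weighted elliptic analysis of the second and third steps: one must check carefully that $\ga\in(\be,0)$ avoids every critical rate of $\d+\d^*$ on $1$-forms on $L$ (this is precisely the function of the two spectral hypotheses on $N$), identify the rate-$\ga$ kernel with the topologically defined space $V$, and prove surjectivity onto the rate-$\ga$ exact $3$-forms via weighted Hodge decomposition. The nonlinear estimates needed to apply the implicit function theorem -- smoothness of $v\mapsto\si(v)$ between the weighted H\"older spaces and the quadratic bound on $\si(v)-{*}v$ -- are routine once the ACyl tubular neighbourhood is set up with uniform control along the end.
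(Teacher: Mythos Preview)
Your outline is a recognisable alternative to the paper's argument, but it diverges from it in two structural ways and has one genuine gap.

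\medskip
\textbf{Differences from the paper.} The paper does \emph{not} invoke an asymptotically cylindrical Weinstein neighbourhood. It builds a tubular neighbourhood $\Th:B_{\ep'}(\nu_L)\to T_L$ directly from the exponential maps of $N\subset M$ and the asymptotic data, and defines the deformation map on \emph{all} small $1$-forms,
\[
F(\eta)=\bigl((\Th\circ\eta)^*(-\om),\,(\Th\circ\eta)^*(\Im\Om)\bigr)\in L^p_{1+l,\ga}(\La^2T^*L)\oplus L^p_{1+l,\ga}(\La^3T^*L),
\]
with linearisation the genuinely elliptic first-order operator $(\d+*\d^*)^p_{2+l,\ga}$. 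Fredholmness then comes straight from Lockhart--McOwen (your Theorem on critical rates is their Theorem~\ref{conditions}). Rather than proving surjectivity onto an ``exact'' target, the paper shows that the \emph{nonlinear} image of $F$ already lies in the image $\mathcal C$ of the linearisation: one writes $F(\eta)=(\d\tau_1,\d\tau_2)$ and pairs against elements $(\chi_1,\chi_2)$ of the adjoint kernel, which are coclosed, so $\langle\d\tau_i,\chi_i\rangle=\langle\tau_i,\d^*\chi_i\rangle=0$. The implicit function theorem is then applied with target $\mathcal C$. Your route---impose Lagrangian first via Weinstein, then study a single map to $3$-forms with linearisation $\d*$ on closed $1$-forms---is closer in spirit to Joyce's later treatments of conical SL deformations; it also works, but the linearisation is no longer elliptic as an operator between full form bundles, so Fredholmness and surjectivity require an extra reduction.

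\medskip
\textbf{The gap.} Your surjectivity step asserts that an exact rate-$\ga$ $3$-form $\beta$ has a coexact rate-$\ga$ primitive ``obtained from $\d^*\De^{-1}$, using that $\De$ is invertible on the relevant rate-$\ga$ spaces''. This is not true as stated. On functions (equivalently on $3$-forms) at rate $\ga<0$ the Laplacian is injective but has cokernel of dimension $b^0(N)$: the dual kernel at rate $-\ga$ consists of harmonic functions asymptotic to $a_i+b_it$ on each end (both $1$ and $t$ are indicial solutions at the root $0$), and the index jump across $0$ is $2b^0(N)$, forcing $\dim\ker_{-\ga}=b^0(N)$. So $\De^{-1}$ does not exist, and you must instead check that your $\beta=\d\si(v)$ is $L^2$-orthogonal to every such $h$. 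Integration by parts gives $\int_L h\,\d\si(v)=-\int_L \d h\wedge\si(v)$, with $\d h$ merely bounded (asymptotic to $b_i\,\d t$), so this does \emph{not} vanish for free; an additional argument is required. The paper's mechanism---pairing $F(\eta)$ against the adjoint kernel of the elliptic operator $(\d+*\d^*)$ and using that those elements are coclosed---is exactly what makes this step clean there, and is what your sketch is missing. Once you identify the cokernel correctly and verify the orthogonality (or, equivalently, set up the problem as the paper does so that the linearisation is elliptic and the target is its image), the rest of your plan goes through.
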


\begin{rem}
In \cite{JoSa}, a similar result is proven for asymptotically cylindrical coassociative submanifolds with fixed boundary in an asymptotically cylindrical $G_2$ manifold. The arguments presented in this paper parallel the arguments given therein.
\end{rem}

Examples of complete Calabi-Yau $n$-folds are constructed by Tian and Yau in \cite{TiYa1,TiYa2}. In general, they construct complete K\"ahler metrics with a prescribed Ricci curvature on quasi-projective manifolds $M$, that is, $M=\ov{M}\setminus D$ where $\ov{M}$ is a projective manifold and $D$ a smooth, ample divisor in $\ov{M}$. Then, under further assumptions that $D$ is anticanonical and $K^{-1}_{\ov{M}}$ is ample, $K_{\ov{M}}$ the canonical line bundle of $\ov{M}$, $M$ admits a complete Ricci-flat K\"ahler metric.

Examples of asymptotically cylindrical Calabi-Yau $3$-folds are then constructed by Kovalev \cite{Kova}. Therein Kovalev gives a construction of Riemannian metrics with holonomy $G_2$ on compact $7$-manifolds by first constructing asymptotically cylindrical Calabi-Yau $3$-folds. The construction of these asymptotically cylindrical Calabi-Yau $3$-folds essentially involves taking a compact, simply-connected K\"ahler manifold $\ov{W}$ and a compact complex surface $D$, where $D$, a $K3$ surface in $\ov{W}$, is an anticanonical divisor with trivial self-intersection class $D\cdot D=0$ in the second integral homology class $H_2(\ov{W},\Z)$ of $\ov{W}$, then writing the asymptotically cylindrical Calabi-Yau $3$-fold as $W=\ov{W}\setminus D$. Kovalev then obtains compact $G_2$-manifolds by crossing two such Calabi-Yau $3$-folds with two copies of $S^1$ to obtain two asymptotically cylindrical $G_2$-manifolds which are then glued together via connected sums.

Further, because of this construction, asymptotically cylindrical Calabi-Yau $3$-folds are of interest to physicists studying mirror symmetry. In particular, understanding the deformations of asymptotically cylindrical special Lagrangian submanifolds with a small decay rate inside these manifolds is a necessary part of being able to construct consistent topological quantum field theories.

We will rely heavily on the theory developed by Lockhart and McOwen \cite{Lock, LoMc} of weighted Sobolev spaces and elliptic operators on noncompact manifolds to determine the conditions on the decay rate $\ga$ and will be used frequently to prove smoothness of forms. It will also allow us to determine that certain asymptotically cylindrical partial differential operators are Fredholm, allowing us to ultimately use the Implicit Function Theorem for Banach Manifolds to prove our result.

The current paper is divided into three main sections: $1)$ Calabi-Yau and Special Lagrangian Geometry; $2)$ Analysis on Asymptotically Cylindrical Special Lagrangian $3$-Manifolds; $3)$ Proof of Theorem \ref{sl1thm}. Notably, the first section includes the defintions of cylindrical and asymptotically cylindrical Calabi-Yau $3$-folds, and cylindrical and asymptotically cylindrical special Lagrangian $3$-submanifolds, and the second section gives the material on weighted Sobolev spaces and elliptic operators on asymptotically cylindrical special Lagrangian $3$-manifolds.

\section{Calabi-Yau and Special Lagrangian Geometry}

We begin with the definitions of a Calabi-Yau $n$-fold and special Lagrangian $n$-submanifold, then present a sketch of the deformation theory of compact special Lagrangian submanifolds of Calabi-Yau $n$-folds; finally, we give the definitions of cylindrical and asymptotically cylindrical Calabi-Yau $3$-folds and cylindrical and asymptotically cylindrical special Lagrangian $3$-submanifolds. References for this section include: Harvey and Lawson, \cite{HaLa}; Joyce, \cite{Joyc1}; McLean, \cite{McLe}; and Kovalev, \cite{Kova}.

\subsection{Calabi-Yau $n$-folds and Special Lagrangian $n$-submanifolds}
\label{geo1}

\begin{dfn}
A \emph{complex} $n$-dimensional {\em Calabi-Yau} manifold $(X,\om,\Om,g)$ is a K\"ahler manifold with zero first Chern class, that is $c_1(X)=0$. In this case, $(X,\om,\Om,g)$ is also called a \emph{Calabi-Yau $n$-fold}.
\end{dfn}

\begin{rem}
The main point of the K\"ahler condition for our purposes is that $\om$ is closed. Also, as the complex structure plays virtually no role, we omit it from our notation.
\end{rem}

The condition that $c_1(X)=0$ is equivalent to the canonical bundle of $X$ being trivial which is true if and only if $X$ admits a nonvanishing holomorphic $(n,0)$-form $\Om$. This last condition actually reduces the holonomy group of $g$ to $SU(n)$ (or more generally to a subgroup of $SU(n)$) and is equivalent to having a nonvanishing \emph{complex} $(n,0)$-form $\Om$ such that $\na\Om=0$. We now define special Lagrangian submanifolds of a Calabi-Yau manifold using these structures:

\begin{dfn}
A \emph{real} $n$-dimensional submanifold $L\subseteq X$ is \emph{special Lagrangian} if $L$ is Lagrangian (i.e. $\om|_L\equiv 0$) and $\Im\Om$ restricted to $L$ is zero. In this case, we will also call $L$ a \emph{special Lagrangian $n$-submanifold}.
\end{dfn}

\begin{rem}
$\Re\Om$ restricts to be the volume form with respect to the induced metric on special Lagrangian submanifolds, so that the special Lagrangian submanifolds $L$ of a Calabi-Yau manifold $X$ are exactly the calibrated submanifolds of $X$ with respect to the calibration form $\Re\Om$ (see \cite[Section 3.1]{HaLa} for more information regarding this point).
\end{rem}

\subsection{Deformations of Compact Special Lagrangian $n$-folds}
\label{geo2}

We now sketch a proof of McLean's result on the moduli space of special Lagrangian deformations in the case of compact special Lagrangian submanifolds of a Calabi-Yau $n$-fold.

\begin{thm}
The moduli space of deformations of a smooth, compact, orientable special Lagrangian submanifold $L$ in a Calabi-Yau manifold $X$ within the class of special Lagrangian submanifolds is a smooth manifold of dimension equal to dim$(H^1(L))$.
\label{compact}
\end{thm}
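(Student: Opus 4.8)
The plan is to follow McLean's original argument, identifying special Lagrangian deformations of $L$ with a nonlinear elliptic problem that is, to leading order, governed by the Hodge Laplacian on $1$-forms. First I would recall the standard fact (Weinstein's Lagrangian neighbourhood theorem) that a tubular neighbourhood of a Lagrangian submanifold $L\subseteq X$ is symplectomorphic to a neighbourhood of the zero section in $T^*L$ with its canonical symplectic form; thus nearby Lagrangian submanifolds correspond to graphs of closed $1$-forms $\chi$ on $L$. Under this identification, a $C^1$-small deformation $L_\chi$ is Lagrangian if and only if $\chi$ is closed, so the Lagrangian condition is linear in this chart. The remaining condition, $\Im\Om|_{L_\chi}\equiv 0$, becomes a nonlinear first-order equation on $\chi$; pulling back and using that $\Re\Om$ is the volume form, one computes that $\Im\Om|_{L_\chi}$ equals $\d\theta$ for a function $\theta=\theta(\chi,\na\chi)$ with $\theta(x,0)=0$ and linearisation $\theta'(0)\chi = d^*\chi$ (up to sign and Hodge star conventions, using that for a closed form $d^*\chi = -*d*\chi$ corresponds to the divergence). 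Hence the special Lagrangian condition on $L_\chi$ is equivalent to the system $\d\chi=0$ and $\d^*\chi + Q(\chi)=0$, where $Q$ collects the higher-order terms and vanishes to second order at $\chi=0$.

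Next I would set up the deformation map as a smooth map of Banach manifolds: let $F(\chi) = (\d\chi,\; \d^*\chi + Q(\chi))$ defined on a neighbourhood of $0$ in the closed $1$-forms inside, say, $C^{k+1,a}(T^*L)$, landing in $C^{k,a}(\Lambda^2 T^*L)\oplus C^{k,a}(L)$ (or the exact pieces thereof). The moduli space $\M_L$ is locally $F^{-1}(0)$. The linearisation at $0$ is $\chi\mapsto(\d\chi,\d^*\chi) = (0,\d^*\chi)$ on closed forms, i.e. essentially $d+d^*$ restricted appropriately; since $L$ is compact, this operator is Fredholm and, on closed $1$-forms, its kernel is exactly the space of harmonic $1$-forms $\mathcal H^1(L)\cong H^1(L,\R)$ by Hodge theory, while one checks the cokernel vanishes after restricting the target to the image of $\d$ on functions (again Hodge theory: every exact $2$-form is $\d$ of a closed $1$-form, every exact function-valued term is hit). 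Then the Implicit Function Theorem for Banach manifolds gives that near $0$, $F^{-1}(0)$ is a smooth manifold whose tangent space is $\ker dF|_0 = \mathcal H^1(L)$, of dimension $b^1(L)=\dim H^1(L)$. A standard elliptic regularity bootstrap shows the solutions are smooth, so the moduli space is independent of $k$ and consists of genuine smooth special Lagrangian submanifolds.

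The main obstacle — really the only place where anything beyond bookkeeping happens — is verifying that the linearised operator has the right kernel and cokernel, equivalently that the restriction of $d+d^*$ to closed $1$-forms, viewed as a map onto exact $2$-forms plus functions, is surjective with kernel the harmonic forms. This is where compactness of $L$ is used essentially (Hodge decomposition, Fredholmness of elliptic operators on closed manifolds), and it is precisely this step that fails in the asymptotically cylindrical setting and must be replaced by the weighted-Sobolev/Lockhart--McOwen analysis invoked in Theorem~\ref{sl1thm}. A secondary point to be careful about is the exact form of the nonlinear term $Q$: one must confirm $F$ is a smooth (or at least $C^1$) map between the chosen Banach spaces, which follows from $Q$ being a smooth function of $\chi$ and its first derivatives with $Q$ and $DQ$ vanishing at $\chi=0$, together with the usual continuity of such Nemytskii-type operators on Hölder spaces. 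With these in hand, McLean's theorem follows; I would only sketch the computation of $\theta'(0)$ and the Hodge-theoretic kernel/cokernel count, leaving the routine estimates to the references \cite{McLe, Joyc1}.
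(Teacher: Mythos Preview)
Your proposal is essentially McLean's argument, which is exactly what the paper sketches. The only substantive difference in presentation is that you invoke Weinstein's Lagrangian neighbourhood theorem to make the $\om$-condition exactly linear ($\d\chi=0$), whereas the paper parametrises deformations by the Riemannian exponential map, so that both components of $F$ are nonlinear and one must argue separately (via the homotopy between $\exp_V$ and the inclusion) that the image of $F$ lies in exact forms; your route saves that step for the first component but is otherwise the same Hodge-theory-plus-IFT proof. One slip to fix: you write that $\Im\Om|_{L_\chi}$ ``equals $\d\theta$ for a function $\theta$'', which is dimensionally off --- $\Im\Om|_{L_\chi}$ is an $n$-form, so what you mean is that $*(\Im\Om|_{L_\chi})$ is a function whose linearisation at $0$ is $\d^*\chi$, or equivalently that $\Im\Om|_{L_\chi}$ is an exact $n$-form with linearisation $*\d^*\chi=\d*\chi$, as in the paper's computation.
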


\begin{proof}[Sketch of proof]
Define the deformation map $F:\nu_L\rightarrow\La^2T^*(L)\oplus\La^nT^*(L)$ from the space of smooth sections of the normal bundle on $L$ to the spaces of differential $2$-forms and differential $n$-forms on $L$ as follows:
$$F(V)=((\exp_V)^*(-\om), (\exp_V)^*(\Im\Om)).$$
Note that $F$ is just the restrictions of $-\om$ and $\Im\Om$ to $L_V$ which are then pulled back to $L$ via $(\exp_V)^*$ where $\exp_V$ is the exponential map giving a diffeomorphism of $L$ onto its image $L_V$ in a neighborhood of $0$.

Recall that the normal bundle $\nu_L$ of a special Lagrangian
submanifold is isomorphic to the cotangent bundle $T^*(L)$. Thus there is a natural identification of normal vector fields to $L$ with differential $1$-forms on $L$; moreover, since $L$ is compact, these normal vector fields can be identified with nearby submanifolds, so that, under these identifications, the kernel of $F$ corresponds to the special Lagrangian deformations.

The linearization of $F$ at $0$
$$\d F(0): \nu_L\rightarrow\Lambda^2T^*(L)\oplus\Lambda^nT^*(L)$$ is given by
\begin{equation*}
\begin{split}
\d F(0)(V)=
&\frac{\displaystyle\partial}{\displaystyle\partial{t}}F(tV)|_{t=0} =\frac{\displaystyle\partial}{\displaystyle\partial{t}}(\exp_{tV}^*(-\om), \exp_{tV}^*(\Im\Om))|_{t=0} \\
=&(-{\mathcal L}_{V}\om|_L, {\mathcal L}_{V}(\Im\Om)|_L).
\end{split}
\end{equation*}
Using the Cartan Formula for the Lie derivative ${\mathcal L}_{V}$, we find
\begin{equation*}
\begin{split}
\d F(0)(V)&=(-(i_V\d\om +\d(i_V\om))|_L,(i_V\d(\Im\Om)
+\d(i_V(\Im\Om)))|_L)\\
&=(-\d(i_V\om)|_L, \d(i_V(\Im\Om))|_L)=(\d v, \d*v),
\end{split}
\end{equation*}
where $i_V$ represents the interior derivative and $v$ is the dual $1$-form to the vector field $V$ with respect to the induced metric. Hence we have
\begin{equation*}
\d F(0)(V) =(\d v,\d*v)=(\d v,*\d^*v).
\end{equation*}

The next step is to show that $\d F(0)(V) =(\d v, \d*v)=(\d v, *\d^*v)$ is onto when considered as a map from $\nu_L$ to \emph{exact} $2$-forms and \emph{exact} $n$-forms. McLean shows this by first proving that $F$ is a map from $\nu_L$ to exact $2$-forms and exact $n$-forms as follows:

Note that the image of $F$ lies in closed $2$-forms and closed $n$-forms since $F$ is the pullback of the closed forms $\om$ and $\Im\Om$. Now, by replacing $V$ with $tV$, we see that $exp:L\rightarrow X$ is homotopic to the inclusion $i:L\rightarrow X$, so that $exp_V^*$ and $i^*$ induce the same map on the level of cohomology. Since $L$ is special Lagrangian, we get $[exp_V^*(\om)]=[i^*(\om)]=[\om |_L]=0$ and $[exp_V^*(\Im\Om)]=[i^*(\Im\Om)]=[\Im\Om|_L]=0$, so the forms in the image of $F$ are cohomologous to zero, that is, they are exact forms.

One can now easily show that, given any exact $2$-form $a$ and exact $n$-form $b$, there exists a $1$-form $v$ satisfying the
equations $\d v=a$ and $\d*v=b$; hence, $\d F(0)(V)$ is surjective as claimed. Finally, after completing the spaces of differential forms with appropriate norms, we can use the Implicit Function Theorem for Banach spaces and an elliptic regularity result to conclude that $F^{-1}(0,0)$ is a smooth manifold with tangent space at $0$ isomorphic to $H^1(L)$.
\end{proof}

\subsection{Asymptotically Cylindrical Calabi-Yau $3$-folds and Asymptotically Cylindrical Special Lagrangian $3$-submanifolds}
\label{geo3}

We now define cylindrical and asymptotically cylindrical Calabi-Yau manifolds. See \cite{Kova} for more information on the definitions in this section.

\begin{dfn}
A Calabi-Yau $3$-fold $(X_0,\om_0,\Om_0,g_0)$ is called {\em cylindrical} if $X_0=M\times S^1\times\R$ where $M$ is a connected, compact $K3$ surface with K\"ahler form $\ka_I$ and holomorphic $(2,0)$-form $\ka_J+i\ka_K$, and $(\om_0,\Om_0,g_0)$ is compatible with the product structure $M\times S^1\times\R$, that is, $\om_0=\ka_I+\d\th\w\d t$, $\Om_0=(\ka_J+i\ka_K)\w(\d\th+i\d t)$ and $g_0=g_{(M\times S^1)}+\d t^2$.
\label{acdfn1}
\end{dfn}

\begin{rem}
The indices on the $2$-forms $\ka_I$, $\ka_J$ and $\ka_K$ on $M$ are meant to reflect the hyperk\"ahler structure of $M$. (This is consistent with the notation in \cite{Kova}.)
\end{rem}

\begin{dfn}
A connected, complete Calabi-Yau $3$-fold $(X,\om,\Om,g)$ is called {\em asymptotically cylindrical with decay rate} $\al$ if there exists a cylindrical Calabi-Yau $3$-fold $(X_0,\om_0,\Om_0,g_0)$ as in Definition \ref{acdfn1}, a compact subset $K\subset X$, a real number $R>0$, and a diffeomorphism $\Psi: M\times S^1\times (R,\infty) \rightarrow X\setminus K$ such that $\Psi^*(\om)=\om_0 +\d\xi_1$ for some $1$-form $\xi_1$ with $\bmd{\na^k\xi_1}=O(e^{\al t})$ and $\Psi^*(\Om) =\Om_0 +\d\xi_2$ for some complex $2$-form $\xi_2$ with $\bmd{\na^k\xi_2}=O(e^{\al t})$ on $M\times S^1\times\R$ for all $k\ge 0$, where $\na$ is the Levi-Civit\`a connection of the cylindrical metric $g_0=g_{(M\times S^1)} + dt^2$.
\label{acdfn2}
\end{dfn}

Notice in this definition that we assume $X$ and $M$ are connected, so that $X$ only has one end; since we are working with Ricci-flat manifolds, this is not a restrictive assumption by a result obtained by the first author in \cite{Salu1}.

We now define cylindrical and asymptotically
cylindrical special Lagrangian submanifolds similarly.

\begin{dfn}
Let $(X_0,\om_0,\Om_0,g_0)$ be a cylindrical Calabi-Yau $3$-fold as in Definition \ref{acdfn1}. A $3$-dimensional submanifold $L_0$ of $X_0$ is called {\em cylindrical special Lagrangian} if $L_0=N\times \{p\}\times\R$ for some compact special Lagrangian submanifold $N$ in $M$, a point $p$ in $S^1$ and the restrictions of $\om_0$ and $\Im\Om_0$ to $L_0$ are zero, that is, $\om_0|_{L_0}=(\kappa_I+\d\th\w\d t)|_{L_0} =0$ and $\Im(\Om_0)|_{L_0}=\Im((\kappa_J+i\kappa_K)\w(\d\th+i\d t))|_{L_0}=0$.
\label{acdfn3}
\end{dfn}

\begin{dfn}
Let $(X_0,\om_0,\Om_0,g_0)$ , $M\times S^1\times\R$, $(X,\om,\Om,g)$, $K, R, \Psi$ and $\al$ be as in Definitions \ref{acdfn1} and \ref{acdfn2}, and let $L_0=N\times \{p\}\times\R$ be a cylindrical special Lagrangian $3$-fold in $X_0$ as in Definition \ref{acdfn3}.

A connected, complete special Lagrangian 3-fold $L$ in
$(X,\om,\Om,g)$ is called \emph{asymptotically cylindrical with decay rate} $\be$ for $\al\leq\be<0$ if there exists a compact
subset $K'\subset L$, a normal vector field $v$ on
$N\t\{p\}\t(R',\infty)$ for some $R'>R$, and a diffeomorphism
$\Phi:N\t\{p\}\t(R',\infty)\ra L\sm K'$ such that the following
diagram commutes:
\begin{equation}
\begin{gathered}
\xymatrix{M\t S^1\t (R',\iy) \ar[d]^\subset & N\t\{p\}\t (R',\iy)
\ar[l]^{\;\;\;\;\exp_v} \ar[r]_{\;\;\;\;\;\;\;\;\;\Phi} & (L\sm
K')
\ar[d]^\subset \\
M\t S^1\t (R,\iy)\ar[rr]^\Psi && (X\sm K), }
\end{gathered}
\label{aceq1}
\end{equation}
and $\bmd{\na^kv}=O(e^{\be t})$ on $N\t\{p\}\t(R',\iy)$ for
all $k\geq 0$.
\label{acdfn4}
\end{dfn}

Notice that here again, we require $L$ to be connected; however, unlike the case of the ambient manifold, it may be that $N$ is \emph{not} connected, so that $L$ may have multiple ends. This will not present a problem in the analysis since we are assuming the ends have the same decay rate.

\section{Analysis on Asymptotically Cylindrical Special Lagrangian $3$-Manifolds}

Lockhart and McOwen \cite{Lock, LoMc} setup a general framework for elliptic operators on noncompact manifolds where the basic tools are weighted Sobolev spaces. Using these spaces and the notion of asymptotically cylindrical linear elliptic partial differential operators, they get weighted Sobolev embedding theorems and recover elliptic regularity and Fredholm results. 
\begin{rem}
For an alternate approach to these types of results on noncompact manifolds, see Melrose \cite{Mel1, Mel2}. 
\end{rem}

We will begin with this theory in the specific case of asymptotically cylindrical special Lagrangian $3$-manifolds then study the asymptotically cylindrical linear elliptic partial differential operator $(\d+*\d^*)^p_{2+l,\ga}$ in this setting.

\subsection{Weighted Sobolev Spaces and Elliptic Operators}
\label{an1}

Let $L$ be an asymptotically cylindrical special Lagrangian $3$-manifold with data as in Definition \ref{acdfn4}.

\begin{dfn}
Let $A$ be a vector bundle on $L$ with smooth metrics $h$ on the fibers and a connection $\na_A$ on $A$ compatible with $h$; let $A_0$ be a vector bundle on $N\times\{p\}\t\R$ that is invariant under translations in $\R$, that is a \emph{cylindrical} vector bundle, with metrics $h_0$ on the fibers and $\na_{A_0}$ a connection on $A_0$ compatible with $h_0$ where $h_0$ and $\na_{A_0}$ are also invariant under translations in $\R$.

$A$, $h$ and $\na_A$ are said to be \emph{asymptotically cylindrical}, asymptotic to $A_0$, $h_0$ and $\na_{A_0}$ respectively, if $\Phi^*(A)\cong A_0$ on $N\t\{p\}\t(R',\infty)$ with $|\Phi^*(h)-h_0|=O(e^{\be t})$ and $|\Phi^*(\na_A)-\na_{A_0}|=O(e^{\be t})$ as $t\to\infty$.
\end{dfn}

Recall that for $k\geq 0$,  $$L^p_k(A)=\left\{s\in\Ga(A): s\text{ is }k\text{times weakly differentiable and }\Vert s\Vert_{L^p_k}<\infty\right\},$$
where $\Ga(A)$ is the space of \emph{all} sections of $A$,
$$\Vert s\Vert_{L^p_k}=\left(\sum_{j=0}^k \int_L \left|\na^j_As\right|^p \text{ } dvol_L\right)^{1/p}$$
and $dvol_L$ is the volume element of $L$, and $$L^p_{k,\rm{loc}}(A)=\{s\in\Ga(A):\phi s\in L^p_k(A) \text{ for all }\phi\in C^\infty_0(L)\},$$ where $C^\infty_0(L)$ is the set of all compactly-supported smooth functions on $L$.

\begin{dfn}
Choose a smooth function $\rho:L\to\R$ such that $\Phi^*(\rho)\equiv t$ on $N\times\{p\}\t(R',\infty)$. By Definition \ref{acdfn4}, this condition prescribes $\rho$ on $L\setminus K'$, so it is only necessary to smoothly extend $\rho$ over $K'$. For $p\geq 1$, $k\geq 0$ and $\ga\in\R$, the \emph{weighted Sobolev space} $L^p_{k,\ga}(A)$ is then the set of sections $s$ of $A$ such that $s\in L^p_{\rm{loc}}(A)$, $s$ is $k$ times weakly differentiable and $$\Vert s \Vert_{L^p_{k,\ga}}=\left(\sum_{j=0}^k \int_L e^{-\ga\rho}\left|\na^j_As\right|^p\text{ } dvol_L \right)^{1/p} < \infty.$$
In particular, $L^p_{k,\ga}(A)$ is a Banach space; further, note that different choices of $\rho$ give the same space $L^p_{k,\ga}(A)$ with equivalent norms since $\rho$ is uniquely determined outside of $K$.

Define the Banach space $C^k_{\ga}(A)$ of continuous sections $s$ of $A$ with $k$ continuous derivatives such that $e^{-\ga\rho}|\na^j_As|$ is bounded for each $j=0,1,\ldots,k$ where the norm is given by $$\Vert s \Vert_{C^k_{\ga}}=\sum_{j=0}^k \sup_{L}e^{-\ga\rho}\left|\na^j_As\right|.$$
\label{wss}
\end{dfn}

In general, from \cite[Theorem1.2]{Bart}, \cite[Theorem 3.10]{Lock} and \cite[Lemma 7.2]{LoMc} there is the following Weighted Sobolev Embedding Theorem (adapted to our case):

\begin{thm}[Weighted Sobolev Embedding Theorem]
Suppose that $k,l$ are integers with $k\geq l\geq 0$ and that $p,q,\ga,\ov{\ga}$ are real numbers with $p,q>1$. Then 
\begin{enumerate}
    \item If $\frac{k-l}{3}\geq\frac{1}{p}-\frac{1}{q}$ and $\ga\geq\ov{\ga}$ there is a continuous embedding of $L^p_{k,\ga}(A)\hookrightarrow L^q_{l,\ov{\ga}}(A)$;
    \item If $\frac{k-l}{3}>\frac{1}{p}$ and $\ga\geq\ov{\ga}$ there is a continuous embedding of $L^p_{k,\ga}(A)\hookrightarrow C^l_{\ov{\ga}}(A)$;
\end{enumerate}
\label{embedding}
\end{thm}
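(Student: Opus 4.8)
The plan is to deduce both embeddings from the ordinary (unweighted) Sobolev embedding theorem on the fixed compact $3$-manifold $N\t[0,2]$ by the localization method of Lockhart and McOwen \cite{Lock,LoMc} (the argument of \cite{Bart} being its analogue for polynomially weighted spaces): one splits $L$ into a compact core together with a countable family of uniformly comparable ``unit pieces'' running out along the cylindrical end, applies the standard Sobolev embedding on each piece with a constant that is \emph{uniform} over the pieces, and then reassembles the local estimates, using the weight hypotheses to control the exponential factors that appear.

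In more detail, fix an integer $R''\ge R'$ and, for each integer $n\ge R''$, set $P_n=\Phi\bigl(N\t\{p\}\t(n,n+2)\bigr)$, and let $P_\ast\supset K'$ be a precompact open set, so that $\{P_\ast\}\cup\{P_n\}_{n\ge R''}$ is an open cover of $L$ of bounded multiplicity; fix a subordinate partition of unity $\{\psi_\ast\}\cup\{\psi_n\}$ with $\bmd{\na^j\psi_n}$ bounded uniformly in $n$ and in $j\le k$. Because $L$, $g$ and $(A,h,\na_A)$ are asymptotically cylindrical, the bounds $\bmd{\na^jv}=O(e^{\be t})$ of Definition \ref{acdfn4} together with the analogous bounds for the metric and for $\na_A$ show that, transported by $\Phi$, the Riemannian metric on $P_n$ and the fibre metric and connection on $A|_{P_n}$ differ from the translation-invariant model data on $N\t(0,2)$ by tensors whose every $C^j$-norm is bounded \emph{independently of} $n$. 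Consequently there is a single constant $C_0$ so that, for all $n$ (and on $P_\ast$), the ordinary Sobolev inequalities hold in interior form: $\Vert s\Vert_{L^q_l(P_n)}\le C_0\Vert s\Vert_{L^p_k(P_n)}$ when $\frac{k-l}{3}\ge\frac1p-\frac1q$, and $\Vert s\Vert_{C^l(P_n)}\le C_0\Vert s\Vert_{L^p_k(P_n)}$ when $\frac{k-l}{3}>\frac1p$. It is exactly here that the $3=\dim L$ in the Sobolev exponents enters.

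Now pass to weighted norms. Since $\Phi^*\rho\equiv t\in(n,n+2)$ on $P_n$, the factor $e^{-\ga\rho}$ lies between $e^{-2|\ga|}e^{-\ga n}$ and $e^{2|\ga|}e^{-\ga n}$ there, so $\Vert s\Vert_{L^p_{k,\ga}(P_n)}$ is uniformly comparable to $e^{-\ga n/p}\Vert s\Vert_{L^p_k(P_n)}$, and similarly for $L^q_{l,\ov\ga}$ and for $C^l_{\ov\ga}$. Feeding this into the uniform estimates above yields, for each $n$, bounds of the form $\Vert s\Vert_{L^q_{l,\ov\ga}(P_n)}\le C_1e^{\ka n}\Vert s\Vert_{L^p_{k,\ga}(P_n)}$ and $\sup_{P_n}e^{-\ov\ga\rho}\bmd{\na^j_As}\le C_1e^{\ka n}\Vert s\Vert_{L^p_{k,\ga}(P_n)}$ with a nonpositive exponent $\ka$, the nonpositivity being precisely the content of the hypothesis $\ga\ge\ov\ga$ (in part (1) one also uses $q\ge p$, forced by $\frac1p-\frac1q\ge0$). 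Summing the first bound over $n$ — legitimate since $q\ge p$ gives the sequence-space inclusion $\ell^p\hookrightarrow\ell^q$, so that $\sum_n\Vert s\Vert_{L^q_{l,\ov\ga}(P_n)}^q\le C\bigl(\sum_n\Vert s\Vert_{L^p_{k,\ga}(P_n)}^p\bigr)^{q/p}$ — and adjoining the $P_\ast$ term gives the continuous embedding of (1); taking the supremum over $n$ in the second bound gives the continuous embedding of (2).

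I expect the main obstacle to be the uniformity asserted in the second paragraph: one must check that the Sobolev constant, the multiplicity of the cover, and the derivative bounds on the partition of unity may all be taken independent of $n$. This is exactly where being asymptotically — as opposed to exactly — cylindrical is used, the exponentially small corrections to the cylindrical metric and connection giving uniformly bounded geometry on all the $P_n$, so that the single compact model $N\t[0,2]$ governs every piece; everything after that is the exponential bookkeeping of \cite{Lock,LoMc}, organized by $\ga\ge\ov\ga$.
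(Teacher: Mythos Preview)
The paper does not supply its own proof of this theorem; it is simply stated as an adaptation of \cite[Theorem~1.2]{Bart}, \cite[Theorem~3.10]{Lock} and \cite[Lemma~7.2]{LoMc}. Your sketch is exactly the standard localization argument underlying those references --- cover the end by translates of a fixed compact model with uniformly bounded geometry, apply the unweighted Sobolev inequality with a uniform constant on each piece, and reassemble --- so there is no independent argument in the paper to compare against.

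One small caveat: your assertion that $q\ge p$ is ``forced by $\frac1p-\frac1q\ge0$'' is not correct; the hypothesis $\frac{k-l}{3}\ge\frac1p-\frac1q$ allows $q<p$ whenever $k>l$, and in that regime the $\ell^p\hookrightarrow\ell^q$ summation step fails unless one uses a strict weight inequality to get a convergent geometric factor. Note also that with the paper's norm convention (weight $e^{-\ga\rho}$, not $e^{-p\ga\rho}$, inside the integral), the piecewise comparison gives an exponent $\ka=\ga/p-\ov\ga/q$ rather than $\ga-\ov\ga$, so the bookkeeping of ``$\ga\ge\ov\ga$ makes $\ka\le0$'' needs adjusting. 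These are wrinkles in the statement as adapted here rather than defects in your method, which is the right one.
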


Now suppose that $A,B$ are two asymptotically cylindrical vector bundles on $L$ which are asymptotic to the cylindrical vector bundles $A_0,B_0$ on $N\times\{p\}\t\R$. Let $F_0:C^{\infty}(A_0)\to C^{\infty}(B_0)$ be a cylindrical linear partial differential operator of order $k$, that is, a linear partial differential operator which is invariant under translations in $\R$, from \emph{smooth} sections $C^{\infty}(A_0)$ of $A_0$ to \emph{smooth} sections $C^{\infty}(B_0)$ of $B_0$. Let $F:C^{\infty}(A)\to C^{\infty}(B)$ be a linear partial differential operator of order $k$ on $L$.

\begin{dfn}
$F$ is said to be an \emph{asymptotically cylindrical operator}, asymptotic to $F_0$, if, under the identifications $\Phi^*(A)\cong A_0$, $\Phi^*(B)\cong B_0$ on $N\times\{p\}\t\R$, $|\Phi^*(F)-F_0|=O(e^{\be t})$ as $t\to\infty$.
\end{dfn}

From these definitions $F$ extends to a bounded linear operator
$$F^p_{k+l,\ga}:L^p_{k+l,\ga}(A)\to L^p_{l,\ga}(B)$$
for all $p>1$, $l\geq 0$ and $\ga\in\R$. We then have the following elliptic regularity result \cite[Theorem 3.7.2]{Lock}:

\begin{thm}
Let $F$ and $F_0$ be as above. If $1<p<\infty$, $l\in\Z$ and $\ga\in\R$, then for all $s\in L^p_{k+l,loc}(A)$,
$$\Vert s\Vert_{L^p_{k+l,\ga}}\leq C(\Vert Fs\Vert_{L^p_{l,\ga}}+\Vert s\Vert_{L^p_{l,\ga}})$$
for some $C$ independent of $s$.
\label{reg}
\end{thm}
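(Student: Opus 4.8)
The plan is to reduce the weighted estimate to an unweighted one by conjugating with the weight, and then to prove the unweighted estimate by a localization-and-summation argument built on the standard interior elliptic estimate, using asymptotic cylindricity to obtain a single constant valid on the whole end. Throughout I use that $F$ (hence $F_0$) is elliptic of order $k$, which is what makes Theorem \ref{reg} an elliptic regularity statement.

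First I would carry out the weight conjugation. On the cylindrical end $\Phi^*(\rho)\equiv t$, and $\na\rho=\d t$ has unit length with respect to $g_0$; extend this picture over $K'$ arbitrarily. The map $s\mapsto u:=e^{-\ga\rho/p}s$ is a bounded isomorphism $L^p_{k+l,\ga}(A)\ra L^p_{k+l,0}(A)$ with bounded inverse: differentiating the exponential produces only lower-order terms $\na^j u=e^{-\ga\rho/p}\sum_{i\le j}(\cdot)\na^i s$ whose coefficients are bounded (they involve powers of $\ga/p$ and $\na\rho$), so $\Vert u\Vert_{L^p_{k+l,0}}$ and $\Vert s\Vert_{L^p_{k+l,\ga}}$ are equivalent, and likewise for the $l$-norms. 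Under this isomorphism $F$ becomes $\ti F:=e^{-\ga\rho/p}\ci F\ci e^{\ga\rho/p}$, whose principal symbol agrees with that of $F$ (conjugation by a smooth nonvanishing function is order zero), so $\ti F$ is again elliptic of order $k$ and asymptotically cylindrical, now asymptotic to the corresponding conjugate of $F_0$. Hence it suffices to establish the estimate in the unweighted case $\ga=0$, i.e. $\Vert s\Vert_{L^p_{k+l,0}}\le C(\Vert Fs\Vert_{L^p_{l,0}}+\Vert s\Vert_{L^p_{l,0}})$.

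For the unweighted estimate I would localize. Choose a locally finite cover of $L$ by a relatively compact set $U_0\supset K'$ together with bands $U_n\cong N\t(n,n+2)$ with concentric shrunken bands $U_n'\cong N\t(n+\tfrac12,n+\tfrac32)$ that still cover the end and have uniformly bounded overlaps. On each band the metric, connection and coefficients of $F$ are uniformly bounded in every $C^j$ norm and $F$ is uniformly elliptic, since $\Phi^*(g)\ra g_0$, $\Phi^*(\na_A)\ra\na_{A_0}$ and $\Phi^*(F)\ra F_0$ with exponentially small error; in particular the geometry is, up to a small perturbation, the single translation-invariant model $N\t(0,2)$. The standard interior $L^p$ elliptic estimate therefore holds on each band with one constant $C$ independent of $n$, namely $\Vert s\Vert_{L^p_{k+l}(U_n')}\le C(\Vert Fs\Vert_{L^p_l(U_n)}+\Vert s\Vert_{L^p_l(U_n)})$, and similarly on $U_0$. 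Raising to the $p$-th power and summing over $n$, the bounded overlap of the $U_n$ lets me bound $\sum_n(\Vert Fs\Vert_{L^p_l(U_n)}^p+\Vert s\Vert_{L^p_l(U_n)}^p)$ by a fixed multiple of $\Vert Fs\Vert_{L^p_{l,0}}^p+\Vert s\Vert_{L^p_{l,0}}^p$, while $\sum_n\Vert s\Vert_{L^p_{k+l}(U_n')}^p$ controls $\Vert s\Vert_{L^p_{k+l,0}}^p$; taking $p$-th roots gives the global estimate.

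The main obstacle is securing this uniform constant across all the bands, i.e. ensuring the interior estimate does not degenerate as $t\ra\iy$, and this is exactly where asymptotic cylindricity enters. Because each $U_n$ is, after translation by $-n$, a fixed geometry plus an error of size $O(e^{\be n})$, I would fix the estimate on the clean translation-invariant model $F_0$ on $N\t(0,2)$, where ellipticity supplies it outright, and then absorb the perturbation $\Phi^*(F)-F_0=O(e^{\be t})$ -- a lower-order term with exponentially small coefficient -- into the left-hand side for $n$ large; the finitely many remaining bands and $U_0$ only enlarge $C$. Finally, the case $l<0$ would be handled by duality: the formal adjoint $F^*$ is again asymptotically cylindrical and elliptic of order $k$, so the $l\ge 0$ estimate for $F^*$, combined with the pairing between $L^p_{l,\ga}$ and its dual, yields the estimate for $F$ in the negative weighted Sobolev spaces, completing the proof.
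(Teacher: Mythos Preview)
The paper does not prove this statement; it is quoted verbatim as \cite[Theorem~3.7.2]{Lock} and used as a black box throughout. So there is no in-paper argument to compare against---you are supplying a proof where the authors simply defer to the literature.

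That said, your outline is correct and is essentially the standard route (and the one underlying Lockhart's original argument): conjugate away the weight to reduce to $\ga=0$, cover the end by unit-width bands with bounded overlap, apply the interior $L^p$ elliptic estimate on each band with a single constant---this being exactly where asymptotic cylindricity (bounded geometry plus convergence of $F$ to the translation-invariant model $F_0$) enters---then sum. The duality remark for negative $l$ is also the right mechanism. One small bookkeeping point worth flagging: your conjugation factor $e^{-\ga\rho/p}$ is correctly matched to this paper's convention, which places $e^{-\ga\rho}$ on $\lvert\na^j s\rvert^p$ rather than on $\lvert\na^j s\rvert$; other references normalize differently, so the exponent would change if you transplant the argument elsewhere.
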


\begin{dfn}
Assume now that $F,F_0$ are also elliptic on $L$ and $N\times\{p\}\t\R$ respectively. Extend $F_0$ to $F_0:C^{\infty}(A_0\ot_{\R}\C)\to C^{\infty}(B_0\ot_{\R}\C)$, and define $\D_{F_0}$ as the set of $\ga\in\R$ such that for some $\de\in\R$ there exists a nonzero section $s\in C^{\infty}(A_0\ot_{\R}\C)$, invariant under translations in $\R$, such that $F_0(e^{(\ga+i\de)t}s)=0$.
\end{dfn}

\noindent The conditions for $F^p_{k+l,\ga}$ to be Fredholm are now given by \cite[Theorem 1.1]{LoMc}.

\begin{thm}
Using the setup of this section, $\D_{F_0}$ is a discrete subset of $\R$, and for $p>1$, $l\geq 0$ and $\ga\in\R$, $F^p_{k+l,\ga}:L^p_{k+l,\ga}(A)\to L^p_{l,\ga}(B)$ is Fredholm if and only if $\ga\not\in\D_{F_0}$.
\label{fred}
\end{thm}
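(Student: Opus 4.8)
The plan is to prove both assertions by reducing the analysis of $F$ on $L$ to that of its translation-invariant model $F_0$ on the cylinder $N\t\R$, and to study $F_0$ through its associated operator pencil. For $\la\in\C$ write $F_0(\la)$ for the operator on $N$ defined by $F_0(\la)u=e^{-\la t}F_0(e^{\la t}u)$ for $u\in C^\iy(A_0|_N)$; since $F_0$ has order $k$ and is invariant under translations in $\R$, this is a polynomial of degree $\le k$ in $\la$ whose coefficients are differential operators on $N$. By definition $\ga\in\D_{F_0}$ precisely when $F_0(\ga+i\de)$ has nontrivial kernel for some $\de\in\R$, so $\D_{F_0}$ is the set of real parts of the \emph{indicial roots}, the points $\la$ at which $F_0(\la)$ fails to be invertible. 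Both the discreteness of $\D_{F_0}$ and the Fredholm dichotomy will follow from the structure of these roots.

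First I would establish discreteness. Because $F_0$ is elliptic on $N\t\R$, the principal symbol of $F_0(\la)$ in the cotangent directions of $N$ is the restriction $\si_k(F_0)(y;\eta,0)$, invertible for $\eta\neq0$ and independent of $\la$; hence $F_0(\la):L^2_k(A_0|_N)\to L^2(B_0|_N)$ is elliptic, and so defines a holomorphic family of Fredholm operators. Full ellipticity of $F_0$---invertibility of its symbol in the $t$-direction as well---yields an a priori estimate showing that $F_0(\la)$ is invertible once $\md{\Im\la}$ is large, uniformly for $\Re\la$ in any bounded interval (so in particular the index is zero throughout). The analytic Fredholm theorem then makes the non-invertible set discrete in $\C$, and the uniform estimate forces only finitely many roots in each vertical strip; projecting to the real axis shows that $\D_{F_0}\subset\R$ is discrete.

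For the Fredholm criterion I would argue in both directions. Suppose $\ga\notin\D_{F_0}$. Conjugating by the weight $e^{-\ga\rho}$ reduces the model on the end to a translation-invariant operator; taking the Fourier transform in $t$ identifies its invertibility in $L^2_\ga$ with the invertibility of $F_0(\ga+i\de)$ for every $\de\in\R$, which holds since $\ga\notin\D_{F_0}$ and the bad set is bounded, so the model is invertible on the cylinder. One then patches a parametrix for $F_0$ on the cylindrical end with an interior parametrix coming from ellipticity on the compact core, the exponential decay $\Phi^*(F)-F_0=O(e^{\be t})$ together with Rellich compactness on compact pieces making the gluing errors compact; this exhibits a two-sided parametrix modulo compact operators, so $F^2_{k+l,\ga}$ is Fredholm, with the weighted elliptic estimate of Theorem \ref{reg} controlling the requisite norms, and the passage to general $p>1$ using the corresponding $L^p$ multiplier estimate for the model. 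Conversely, if $\ga\in\D_{F_0}$, pick a nonzero translation-invariant solution $e^{(\ga+i\de)t}s$ of $F_0=0$; cutting it off with a slowly varying cutoff spread over $[n,3n]$ and sending $n\to\iy$ produces a sequence bounded below in $L^p_{k+l,\ga}$ whose normalized $F$-images tend to $0$ in $L^p_{l,\ga}$ (the commutator terms scale like $n^{-1}$ and the error from $\Phi^*(F)-F_0=O(e^{\be t})$ decays exponentially) while the supports escape to infinity, so it admits no subsequence converging together with a compact remainder. This violates the estimate $\nm{u}\le C(\nm{Fu}+\nm{Ku})$ with $K$ compact that characterizes semi-Fredholm operators, so the range is not closed and $F^p_{k+l,\ga}$ is not Fredholm. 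Finally, independence of the conclusion from $(p,l)$ follows because kernel and cokernel are identified, via Theorem \ref{reg}, with spaces of smooth solutions of $F$ and its formal adjoint in the dual weight, which do not depend on $(p,l)$.

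The main obstacle is twofold. The delicate point in the discreteness argument is upgrading discreteness of the indicial roots in $\C$ to discreteness of their real parts in $\R$: this requires the uniform-in-strips invertibility of $F_0(\la)$ for large $\md{\Im\la}$, which is exactly where the full (not merely tangential) ellipticity of $F_0$ enters. The delicate point in the Fredholm half is the model analysis for $p\neq2$, where Plancherel is unavailable and one must instead invoke a Mikhlin-type $L^p$ Fourier multiplier theorem to convert the pointwise invertibility of $F_0(\ga+i\de)$ into boundedness of the inverse on $L^p_\ga$.
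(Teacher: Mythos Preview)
The paper does not actually prove this theorem: it is stated with a direct citation to \cite[Theorem 1.1]{LoMc} and no argument is given. So there is no ``paper's own proof'' to compare against beyond the reference.

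Your sketch is a correct outline of the Lockhart--McOwen argument itself. The reduction to the operator pencil $F_0(\la)$ on $N$, the use of analytic Fredholm theory together with the large-$\md{\Im\la}$ estimate to get discreteness (and, crucially, only finitely many indicial roots in each vertical strip so that the real parts form a discrete set), the parametrix patching for the Fredholm direction, and the Weyl-sequence construction for the converse are exactly the ingredients in \cite{LoMc}. You also correctly flag the two genuinely delicate points: the strip estimate needs the full ellipticity of $F_0$ (not just ellipticity in the $N$-directions), and the $L^p$ case for $p\neq 2$ requires a Mikhlin-type multiplier argument rather than Plancherel. One small quibble: in your last sentence, independence of the Fredholm \emph{property} from $(p,l)$ is not what is at issue---you have already shown the dichotomy for each fixed $(p,l)$---so that remark is superfluous (though the observation about kernels/cokernels is correct and is what the paper records as Lemma~\ref{ker}).
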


This theorem and Theorem \ref{reg} imply that $\ker(F^p_{k+l,\ga})$ is a finite-dimensional vector space of smooth sections of $A$ whenever $\ga\not\in\D_{F_0}$, and from the Weighted Sobolev Embedding Theorem and the fact that $\ker(F^p_{k+l,\ga})$ is invariant under small changes in $\ga$ one can show:

\begin{lem}
If $\ga\not\in\D_{F_0}$, then the kernel $\ker(F^p_{k+l,\ga})$ is the same for any choices of $p>1$ and $l\geq 0$ and is a finite-dimensional vector space consisting of smooth sections of $A$.
\label{ker}
\end{lem}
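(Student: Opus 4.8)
The plan is to prove Lemma~\ref{ker} by combining the Fredholm and regularity theorems already at hand (Theorems~\ref{fred} and \ref{reg}) with the Weighted Sobolev Embedding Theorem (Theorem~\ref{embedding}), exploiting that the indicial set $\D_{F_0}$ is discrete. First I would fix $\ga\notin\D_{F_0}$ and take any $s\in\ker(F^p_{k+l,\ga})$. Since $Fs=0$, the elliptic regularity estimate of Theorem~\ref{reg} applied iteratively (bootstrapping on $l$) shows $s\in L^p_{k+m,\ga}(A)$ for every $m\ge 0$; then part (2) of Theorem~\ref{embedding}, taking $m$ large enough that $\tfrac{k+m-j}{3}>\tfrac1p$, gives $s\in C^j_\ga(A)$ for every $j$, so $s$ is smooth. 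This establishes the "finite-dimensional vector space of smooth sections" part once we recall from Theorem~\ref{fred} that $F^p_{k+l,\ga}$ is Fredholm, hence has finite-dimensional kernel.

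Next I would show independence of $l$. For fixed $p$, the space $L^p_{k+l',\ga}(A)$ embeds continuously into $L^p_{k+l,\ga}(A)$ whenever $l'\ge l$ (part (1) of Theorem~\ref{embedding} with $q=p$, same weight), so $\ker(F^p_{k+l',\ga})\subseteq\ker(F^p_{k+l,\ga})$; conversely, the bootstrapping argument above shows any element of $\ker(F^p_{k+l,\ga})$ actually lies in $L^p_{k+l',\ga}(A)$ and is annihilated by $F$, hence lies in $\ker(F^p_{k+l',\ga})$. Thus the kernel does not depend on $l$, and one may as well take $l=0$.

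For independence of $p$, the natural route is to pass through a slightly different weight. Fix $\ga\notin\D_{F_0}$; since $\D_{F_0}$ is discrete there is $\ep>0$ with $[\ga-\ep,\ga+\ep]\cap\D_{F_0}=\{\ga\}$ if $\ga$ were in it, but in our case $[\ga-\ep,\ga+\ep]\cap\D_{F_0}=\emptyset$. Given $s\in\ker(F^p_{k,\ga})$, smoothness together with the elliptic estimate in the smaller weight $\ga-\ep$ and the containment $L^p_{k,\ga-\ep}\subseteq L^p_{k,\ga}$ is not immediately enough; instead I would argue that a smooth solution of $Fs=0$ that lies in $L^p_{k,\ga}(A)$ for one $p$ actually decays like $O(e^{(\ga-\ep')\rho})$ for small $\ep'>0$ — this follows from the structure of solutions of the asymptotic model operator $F_0$, whose decaying solutions are governed by the exponents in $\D_{F_0}$, so a solution with no indicial root in $(\ga-\ep,\ga]$ that is merely in $L^p_{k,\ga}$ must in fact be $C^k_{\ga-\ep'}$. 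Once $s\in C^k_{\ga-\ep'}(A)$, it lies in $L^q_{k,\ga}(A)$ for \emph{every} $q>1$ (because $e^{-\ga\rho}|\na^j s|$ is bounded by $Ce^{-\ep'\rho}$, which is $L^q$ on the cylindrical end against $dvol_L$, and on the compact part everything is bounded), so $s\in\ker(F^q_{k,\ga})$ for all $q$. The reverse inclusion is symmetric, giving $p$-independence.

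The main obstacle is the last step: cleanly justifying that membership in $\ker(F^p_{k,\ga})$ forces the stronger decay $C^k_{\ga-\ep'}$, i.e. that no "borderline" behavior at rate exactly $e^{\ga\rho}$ can occur when $\ga\notin\D_{F_0}$. This is precisely where one needs the translation-invariant model operator $F_0$ on $N\times\{p\}\times\R$ and a separation-of-variables / Fourier analysis in the $\R$-variable: the asymptotic expansion of $s$ on the end is a sum of terms $e^{(\la+i\mu)t}(\text{polynomial in }t)\cdot(\text{section on }N)$ with $\la\in\D_{F_0}$, and the absence of any such $\la$ in $(\ga-\ep,\ga]$ upgrades the decay. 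Rather than redo this analysis in full, I would cite the relevant part of Lockhart--McOwen \cite{LoMc} (or Lockhart \cite{Lock}) for the asymptotic expansion of solutions, and then the $p$-independence and smoothness follow by the elementary embedding arguments above; the finite-dimensionality is immediate from Theorem~\ref{fred}.
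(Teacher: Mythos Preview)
Your proposal is correct and follows essentially the same route as the paper. The paper does not give a formal proof of this lemma; it only remarks, immediately before the statement, that the result follows ``from the Weighted Sobolev Embedding Theorem and the fact that $\ker(F^p_{k+l,\ga})$ is invariant under small changes in $\ga$,'' together with Theorems~\ref{fred} and~\ref{reg}. Your argument is a faithful unpacking of that sentence: finite-dimensionality from Fredholmness, smoothness by bootstrapping Theorem~\ref{reg} and then Theorem~\ref{embedding}, $l$-independence by the same bootstrap, and $p$-independence via the improved decay that comes from the absence of indicial roots near $\ga$---which is exactly what ``invariance under small changes in $\ga$'' encodes. Your plan to cite Lockhart--McOwen for the asymptotic expansion that upgrades $L^p_{k,\ga}$-membership to $C^k_{\ga-\ep'}$ is the standard way to justify that last step, and is consistent with how the paper treats the point (namely, as a known fact from \cite{Lock,LoMc} rather than something to be re-proved).
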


Let $F^*:C^{\infty}(B)\to C^{\infty}(A)$ be the formal adjoint of $F$; that is, $F^*$ is the asymptotically cylindrical linear elliptic partial differential operator of order $k$ on $L$ such that
$$\langle Fs,\tilde{s}\rangle_{L^2(B)}=\langle s,F^*\tilde{s}\rangle_{L^2(A)}$$
for all compactly-supported $s\in C^{\infty}(A)$, $\tilde{s}\in C^{\infty}(B)$.

Then for $p>1$, $l\geq 0$ and $\ga\not\in\D_{F_0}$, $(F^*)^q_{-l,-\ga}:L^q_{-l,-\ga}(B)\to L^q_{-k-l,-\ga}(A)$ is the dual operator of $F^p_{k+l,\ga}$ where $q>1$ satisfies $\frac{1}{p}+\frac{1}{q}=1$, $L^q_{-l,-\ga}(B)$ and $L^q_{-k-l,-\ga}(A)$ are isomorphic as Banach spaces to the dual spaces $(L^p_{l,\ga}(B))^*$ and $(L^p_{k+l,\ga}(A))^*$ respectively and these isomorphisms identify $(F^*)^q_{-l,-\ga}$ with $(F^p_{k+l,\ga})^*$. Further, since Theorem \ref{reg} also holds for negative differentiability, $\ker((F^*)^q_{-l,-\ga})=\ker((F^*)^q_{k+m,-\ga})$ for all $m\in\Z$. This allows us to identify $\coker(F^p_{k+l,\ga})$ with $\ker((F^*)^q_{k+m,-\ga})^*$ for $\ga\not\in\D_{F_0}$, $p,q>1$ with $\frac{1}{p}+\frac{1}{q}=1$ and $l,m\geq 0$; moreover, the index of $F^p_{k+l,\ga}$ is then given by
\begin{equation}
\ind(F^p_{k+l,\ga})=\dim\ker(F^p_{k+l,\ga})-\dim\ker((F^*)^q_{k+m,-\ga}).
\label{ind1}
\end{equation}

\subsection{$\d+*\d^*$}
\label{an3}

Let $L$ be an asymptotically cylindrical special Lagrangian $3$-manifold with data as in Definition \ref{acdfn4}, and consider the asymptotically cylindrical linear elliptic operator
$$\d+*\d^*:C^{\infty}(T^*L)\to C^{\infty}(\La^2T^*L)\op C^{\infty}(\La^3T^*L),$$
with formal adjoint given by
$$\d^*+\d*:C^{\infty}(\La^2T^*L)\op C^{\infty}(\La^3T^*L)\to C^{\infty}(T^*L).$$
We study the extension
\begin{equation}
(\d+*\d^*)^p_{2+l,\ga}:L^p_{2+l,\ga}(T^*L)\to L^p_{1+l,\ga}(\La^2T^*L)\op L^p_{1+l,\ga}(\La^3T^*L)
\label{operator}
\end{equation}
for $p>1$, $l\geq 0$ and $\ga\in\R$. Suppose further that $L$ has no compact, connected components, so that $H^3(L,\R)=H^0_{cs}(L,\R)=0$; then $N$ is a compact, oriented $2$-manifold, and $L$ is the interior of a compact, oriented $3$-manifold $\ov{L}$ with boundary $\partial\ov{L}=N$.

From this we have the following long exact sequence in cohomology:
\begin{equation*}
\begin{split}
0\to H^0(L)\to H^0(N)\to H^1_{cs}(L)\to H^1(L)\to &H^1(N)\\
&\downarrow\\
0\leftarrow H^3_{cs}(L)\leftarrow H^2(N)\leftarrow H^2(L)\leftarrow &H^2_{cs}(L)
\end{split}
\end{equation*}
where $H^k(L)=H^k(L,\R)$ and $H^k(N)=H^k(N,\R)$ are the de Rham cohomology groups, $H^k_{cs}(L,\R)$ are compactly-supported de Rham cohomology groups and $b^k(L)$, $b^k(N)$ and $b^k_{cs}(L)$ the corresponding Betti numbers. If $V\subseteq H^1(L,\R)$ denotes the image of the natural map $H^1_{cs}(L,\R)\hookrightarrow H^1(L,\R)$, $[\chi]\mapsto[\chi]$, then, from the long exact sequence, $\dim(V)=b^1_{cs}(L)-b^0(N)+b^0(L)=b^2(L)-b^0(N)+b^0(L)$.

We now summarize a number of results from the previous subsection applied specifically to $(\d+*\d^*)^p_{2+l,\ga}$ as well as identify the kernel and cokernel of this operator for small $\ga<0$.

\begin{thm}
Suppose $\max\{\D_{(\d+*\d^*)_0}\cap(-\infty,0)\}<\ga<0$, $p,q>1$ with $\frac{1}{p}+\frac{1}{q}=1$ and $l,m\geq0$. Then the operator $(\d+*\d^*)^p_{2+l,\ga}$ is Fredholm with $\coker((\d+*\d^*)^p_{2+l,\ga})\cong(\ker((\d^*+\d*)^q_{2+m,-\ga}))^*$. The kernel $\ker((\d+*\d^*)^p_{2+l,\ga})$ is a vector space of smooth, harmonic $1$-forms, and the map $\ker((\d+*\d^*)^p_{2+l,\ga})\to H^1(L,\R)$, $\chi\mapsto[\chi]$, induces an isomorphism of $\ker((\d+*\d^*)^p_{2+l,\ga})$ with the image, $V$, of the natural inclusion map $H^1_{cs}(L,\R)\hookrightarrow H^1(L,\R)$; hence, $\dim\ker((\d+*\d^*)^p_{2+l,\ga})=\dim V$. Finally, the kernel $\ker((\d^*+\d*)^q_{2+m,-\ga})$ is a vector space of smooth coclosed $2$-forms and smooth harmonic $3$-forms.
\label{main}
\end{thm}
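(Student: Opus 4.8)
The plan is to assemble the statement from the general machinery of Section~\ref{an1} applied to $F=\d+*\d^*$ together with a careful analysis of the indicial roots $\D_{(\d+*\d^*)_0}$ and a Hodge-theoretic identification of the kernel. First I would record that $(\d+*\d^*)_0$, the cylindrical model operator on $N\t\{p\}\t\R$, is $\d+*\d^*$ for the product metric; a section $s=e^{(\ga+i\de)t}\si(x)$ (with $\si$ a form on $N$, here the $1$-form case decomposing as $\si = f\,\d t + \tau$ for $f$ a function and $\tau$ a $1$-form on $N$) lies in its kernel iff a coupled system on $N$ built from $\d_N$, $\d_N^*$ and the parameter $\ga+i\de$ has a nonzero solution. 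Separating variables and eliminating, one finds this forces $(\ga+i\de)^2$ to be an eigenvalue of the Laplacian on functions on $N$ or on exact $1$-forms on $N$ (the two pieces coming from the ``$f\,\d t$'' and ``$\tau$'' components, with closed-and-coclosed harmonic pieces contributing $\ga=0$). Hence the hypothesis that $(0,\ga^2]$ contains no such eigenvalues — which is exactly the eigenvalue condition imported from Theorem~\ref{sl1thm} — is precisely the statement $\max\{\D_{(\d+*\d^*)_0}\cap(-\iy,0)\}<\ga<0$. By Theorem~\ref{fred} this makes $(\d+*\d^*)^p_{2+l,\ga}$ Fredholm, and by the discussion around \eq{ind1} its cokernel is identified with $(\ker((\d^*+\d*)^q_{2+m,-\ga}))^*$; Lemma~\ref{ker} and Theorem~\ref{reg} give that both kernels are $p,l$-independent finite-dimensional spaces of smooth sections. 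Ellipticity of $\d+*\d^*$ with $(\d+*\d^*)\chi=0$ forces $\d\chi=0$ and $\d^*\chi=0$, so $\ker$ consists of smooth \emph{harmonic} $1$-forms; similarly the adjoint kernel consists of smooth coclosed $2$-forms and closed (hence harmonic) $3$-forms, since $H^3(L)=0$ already kills the $3$-form part but the statement only claims harmonicity there.

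The heart of the argument is the isomorphism $\ker((\d+*\d^*)^p_{2+l,\ga})\xrightarrow{\;\sim\;}V$ via $\chi\mapsto[\chi]$. Since $\ga<0$, an element $\chi$ of the kernel decays like $O(e^{\ga t})\to 0$; being closed, it represents a class in $H^1(L,\R)$, and the key point is that a \emph{decaying} closed form represents a class in the image of $H^1_{cs}\to H^1$. This is the standard fact that on a manifold with cylindrical end, $L^2$ (equivalently, exponentially decaying) harmonic forms have cohomology classes lying in the image of compactly-supported cohomology — I would invoke it either by a direct argument (subtract off a compactly supported primitive of $\chi$ near the end, using that $\chi$ restricted to $N\t\{p\}\t\R$ is exact because the limiting form on $N$ is its harmonic part which, by the $\ga<0$ asymptotics, must vanish) or by citing the Atiyah--Patodi--Singer / Lockhart picture. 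Conversely, given a class in $V$, I would produce a decaying harmonic representative: lift it to a compactly supported closed form, then solve $(\d+*\d^*)\chi = $ (correction term) in the weighted space using surjectivity onto the appropriate complement, exactly as in the compact case in Theorem~\ref{compact} but now with the Fredholm theory controlling the obstruction. Injectivity of $\chi\mapsto[\chi]$ amounts to: a decaying \emph{exact} harmonic $1$-form is zero, which follows from integration by parts ($\chi=\d u$, $\chi$ harmonic, decay makes the boundary term at infinity vanish, so $\int|\chi|^2=\int\langle\d u,\chi\rangle = \int\langle u,\d^*\chi\rangle + \text{boundary} = 0$). Combining, $\dim\ker((\d+*\d^*)^p_{2+l,\ga})=\dim V$.

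I expect the main obstacle to be the two directions of the kernel--$V$ identification done \emph{cleanly in the weighted-space language}: one must be careful that ``decaying closed form $\Rightarrow$ class in $V$'' uses the full strength of the eigenvalue hypothesis (so that there is no slowly-decaying or constant tail on the cross-section $N$ obstructing the primitive near the end), and that ``class in $V$ $\Rightarrow$ decaying harmonic representative exists'' uses the Fredholmness (Theorem~\ref{fred}) plus the cokernel identification to guarantee the relevant equation is solvable in $L^p_{1+l,\ga}$. The exact-sequence bookkeeping $\dim V = b^1_{cs}(L)-b^0(N)+b^0(L)$ is already in hand from the displayed long exact sequence, so once the isomorphism is established the dimension count is immediate. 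The remaining assertions — Fredholmness, smoothness, the cokernel formula, and the characterization of the adjoint kernel as coclosed $2$-forms plus harmonic $3$-forms — are then direct consequences of Theorems~\ref{reg}, \ref{fred} and Lemma~\ref{ker} as recalled above, with ellipticity of $\d^*+\d*$ giving the split $\d^*\be=0$, $\d*\be=0$ on a $2$-form $\be$ and $\d^*\ga'=0$ (hence, being top degree, $\ga'$ harmonic) on a $3$-form $\ga'$.
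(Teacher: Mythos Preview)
Your proposal is largely sound and parallels the paper for Fredholmness, the cokernel identification, smoothness, and harmonicity of the kernel. Two points of divergence are worth noting. First, the indicial-root computation you sketch (reducing $\D_{(\d+*\d^*)_0}$ to eigenvalues of $\De_N$ on functions and exact $1$-forms) is the content of the paper's \emph{next} result, Theorem~\ref{conditions}; for the present theorem the hypothesis $\max\{\D_{(\d+*\d^*)_0}\cap(-\iy,0)\}<\ga<0$ already places $\ga$ outside the exceptional set, so Theorem~\ref{fred} applies directly and no indicial analysis is needed here. Second, for the isomorphism $\ker\cong V$, the paper does not carry out your direct argument with decaying primitives and integration by parts; it simply invokes \cite[Proposition~3.9]{JoSa}, which identifies the decaying harmonic $1$-forms with the image of $H^1_{cs}(L,\R)\to H^1(L,\R)$. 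Your sketch is a reasonable alternative proof of that proposition, but it is more work than the paper actually does at this point.

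There is, however, a genuine gap in your treatment of the adjoint kernel. The operator $\d^*+\d*$ sends $(\eta_2,\eta_3)$ to the single $1$-form $\d^*\eta_2+\d*\eta_3$, so its vanishing is \emph{one} equation, and ellipticity alone does not ``split'' it into $\d^*\eta_2=0$ and $\d*\eta_3=0$ as you assert. The paper decouples by applying $\d^*$ to get $\d^*\d(*\eta_3)=0$; since $*\eta_3$ is a function, trivially $\d\d^*(*\eta_3)=0$, so $*\eta_3$ lies in $\ker(\d\d^*+\d^*\d)^q_{2+m,-\ga}=\ker(\d+\d^*)^q_{2+m,-\ga}$, whence $\d*\eta_3=0$. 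Only then does $\d^*\eta_2=0$ follow, and $\d^*\eta_3=-*\d*\eta_3=0$ together with $\d\eta_3=0$ (top degree) gives harmonicity of $\eta_3$. Without this decoupling step your argument does not establish the claimed structure of $\ker((\d^*+\d*)^q_{2+m,-\ga})$.
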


\begin{proof}
Since $\ga\not\in\D_{(\d+*\d^*)_0}$, $(\d+*\d^*)^p_{2+l,\ga}$ is Fredholm with $\coker((\d+*\d^*)^p_{2+l,\ga})\cong\ker((\d^*+\d*)^q_{2+m,-\ga})^*$ from Lemma \ref{fred} and the remarks following Lemma \ref{ker}.

For $\eta\in\ker((\d+*\d^*)^p_{2+l,\ga})$, $(0,0)\equiv(\d+*\d^*)\eta=(\d\eta,*\d^*\eta)$.
Since $0=*\d^*\eta$ if and only if $0=\d^*\eta$, $\eta$ is harmonic, and smoothness follows from Theorem \ref{reg}.

Let $\H^1$ denote the space of $1$-forms in $\ker(\d+\d^*)^p_{l+2,\ga}$ where $(\d+\d^*)^p_{2+l,\ga}:\op_{k=0}^n L^p_{2+l}(\La^kT^*L)\to\op_{k=0}^n L^p_{1+l}(\La^kT^*L)$. Then by \cite[Proposition 3.9]{JoSa}, the map $\H^1\to H^1(L,\R)$ is injective with image that of the natural inclusion map $H^1_{cs}(L,\R)\to H^1(L,\R)$, which is $V$ in the notation above. Since $\ker((\d+*\d^*)^p_{l+2,\ga})=\H^1$, we have the desired isomorphism.

Finally, let $(\eta_2,\eta_3)\in\ker((\d^*+\d*)^q_{2+m,-\ga})$, so that $\d^*\eta_2+\d*\eta_3=0$. Taking $\d^*$ of this equation yields $\d^*\d*\eta_3=0$. Further, since $*\eta_3$ is a function on $L$, $\d\d^**\eta_3=0$ which implies that $*\eta_3\in\ker((\d\d^*+\d^*\d)^q_{2+m,-\ga})=\ker((\d+\d^*)^q_{2+m,-\ga})$. Now, because $0=\d*\eta_3$ we see that $0=\d^*\eta_2+\d*\eta_3=\d^*\eta_2$, so $\eta_2$ is a coclosed $2$-form. Also, applying $-*$ to both sides of $0=\d*\eta_3$, we get $0=\d^*\eta_3$; thus, $\eta_3$ is a coclosed $3$-form. Last, we notice that $0=\d^*(*\eta_3)=*\d\eta_3$ from which it follows that $\eta_3$ is closed.  Smoothness, in both cases, follows from elliptic regularity, Theorem \ref{reg}.
\end{proof}

\begin{thm}
Let $p>1$, $l\geq 0$ and $\ga\in\R$. Then the operator (\ref{operator}) is \emph{not} Fredholm if and only if any of the following conditions hold:
\begin{enumerate}
    \item $\ga=0$,
    \item $\ga^2$ is a positive eigenvalue of $\De=\d^*\d$ on functions on $N$, or
    \item $\ga^2$ is a positive eigenvalue of $\De=\d^*\d+\d\d^*=\d\d^*$ on exact $1$-forms on $N$.
\end{enumerate}
\label{conditions}
\end{thm}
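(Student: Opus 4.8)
The plan is to apply Theorem~\ref{fred}: the extension \eq{operator} is Fredholm if and only if $\ga\notin\D_{(\d+*\d^*)_0}$, where $(\d+*\d^*)_0$ denotes the cylindrical model operator on $L_0=N\times\{p\}\times\R$. So the content of the theorem is a computation of $\D_{(\d+*\d^*)_0}$, which I claim equals
$$\D_{(\d+*\d^*)_0}=\{0\}\cup\bigl\{\ga\in\R:\ga^2\text{ is a positive eigenvalue of }\De=\d^*\d\text{ on }C^\iy(N)\bigr\},$$
together with the elementary observation that a positive real number is an eigenvalue of $\De$ on functions on $N$ exactly when it is an eigenvalue of $\De=\d\d^*$ on exact $1$-forms on $N$ (apply $\d$ to a function-eigenfunction in one direction; in the other, note that $\d^*\be\neq0$ for a positive-eigenvalue exact eigenform $\be$ and that $\d^*\be$ is then a function-eigenfunction with the same eigenvalue).

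First I would make the model operator explicit. On $L_0=N\times\R$ with the product metric $g_N+\d t^2$, write a $1$-form as $\xi=\al+f\,\d t$ with $\al\in\Om^1(N)$ and $f\in\Om^0(N)$. Since the Hodge star is an isomorphism, for $z=\ga+i\de\in\C$ and $\xi$ independent of $t$ the equation $(\d+*\d^*)_0(e^{zt}\xi)=0$ is equivalent to $\d(e^{zt}\xi)=0$ together with $\d^*(e^{zt}\xi)=0$. A direct computation with $g_N+\d t^2$ gives
$$\d(e^{zt}\xi)=e^{zt}\bigl[\d_N\al+(\d_N f-z\al)\w\d t\bigr],\qquad \d^*(e^{zt}\xi)=e^{zt}\bigl(\d^*_N\al-zf\bigr),$$
so $(\d+*\d^*)_0(e^{zt}\xi)=0$ is equivalent to the system
$$\d_N\al=0,\qquad \d_N f=z\al,\qquad \d^*_N\al=zf$$
on $N$. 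By the definition of $\D_{(\d+*\d^*)_0}$, a real number $\ga$ lies in $\D_{(\d+*\d^*)_0}$ precisely when this system admits a nonzero solution $(\al,f)$ for some $\de\in\R$.

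Then I would solve the system. If $z=0$ it has the nonzero solution $\al=0$, $f$ a nonzero constant, so $0\in\D_{(\d+*\d^*)_0}$; this is case~(1). If $z\neq0$, then $f=0$ forces $\al=0$ by the middle equation, so every nonzero solution has $f\neq0$; eliminating $\al=z^{-1}\d_N f$ makes the first equation automatic and turns the third into $\De f=\d^*_N\d_N f=z^2f$, while eliminating $f=z^{-1}\d^*_N\al$ turns the second into $\d_N\d^*_N\al=z^2\al$, which together with the first says $\De\al=z^2\al$ with $\al$ closed — hence exact, as $z^2\neq0$. So a nonzero solution exists if and only if $z^2$ is an eigenvalue of $\De$ on functions on $N$, equivalently on exact $1$-forms on $N$; these are conditions~(2) and~(3), and by the observation of the first paragraph they describe the same set of positive reals. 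Since $\De$ on the closed surface $N$ is self-adjoint with non-negative spectrum, such a $z^2$ is a positive real, whence $\ga\de=0$ and $\ga^2-\de^2>0$; were $\de\neq0$ this would force $\ga=0$ and $\ga^2-\de^2<0$, a contradiction, so $\de=0$ and $\ga^2$ is a positive eigenvalue of $\De$. Conversely every such $\ga$ lies in $\D_{(\d+*\d^*)_0}$ by the solution just described. Combining the two cases gives the displayed description of $\D_{(\d+*\d^*)_0}$, which via Theorem~\ref{fred} is precisely the assertion that \eq{operator} fails to be Fredholm exactly under (1), (2) or (3).

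The step I expect to be the main obstacle is the first one: computing $(\d+*\d^*)_0$ and reducing $(\d+*\d^*)_0(e^{zt}\xi)=0$ to the indicial system above. This requires decomposing $1$-, $2$- and $3$-forms on $N\times\R$ into their components along and transverse to $\d t$, tracking each component carefully (the $\Om^2(N)$ and $\Om^1(N)\w\d t$ parts of $\d(e^{zt}\xi)$, and the $\Om^3$ part of $*\d^*(e^{zt}\xi)$) and getting the signs in the codifferential of a Riemannian product right, and checking that no solution is overlooked — in particular that for $z\neq0$ the $1$-form $\al$ is necessarily exact. Once the indicial system is established, the remaining input is only the standard spectral theory of $\De$ on the compact $2$-manifold $N$.
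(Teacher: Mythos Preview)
Your proof is correct and follows essentially the same approach as the paper: invoke Theorem~\ref{fred} to reduce to computing $\D_{(\d+*\d^*)_0}$, separate variables on $N\times\R$, and solve the resulting indicial system on $N$ using the spectral theory of $\De_N$. The only differences are tactical --- you retain the equation $\d_N\al=0$ explicitly and eliminate by direct substitution, whereas the paper Hodge-decomposes $\eta=\eta_0+\eta_1+\eta_2$ first to isolate the exact part --- and you add the helpful observation that conditions~(2) and~(3) describe the same set of~$\ga$.
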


\begin{proof}
Throughout this proof let $\d^*_L,*_L$ operate on $L$ and $\d^*,*$ operate on $N$. A smooth section of $(T^*(N\t\{p\}\t\R)\ot_\R\C)$ which is invariant under translations in $\R$ can be written uniquely as $\eta+f\d t$ where $\eta\in C^{\infty}(T^*N\ot_{\R}\C)$, the smooth sections of $T^*N\ot_{\R}\C$, and $f:N\to\C$ is smooth. Then by Theorem \ref{fred}, $(\d+*_L\d^*_L)^p_{2+l,\ga}$ is not Fredholm if and only if there exist $f,\eta$ with $f,\eta$ not both zero and $\de\in\R$ such that $(\d+*_L\d^*_L)(e^{(\ga+i\de)t}(\eta+f\d t))\equiv(0,0)$. Note that $0\equiv*_L\d^*_L(e^{(\ga+i\de)t}(\eta+f\d t))$ if and only if $0\equiv\d^*_L(e^{(\ga+i\de)t}(\eta+f\d t))$. Thus:
$$0\equiv\d(e^{(\ga+i\de)t}(\eta+f\d t))=e^{(\ga+i\de)t}[-(\ga+i\de)\eta\w\d t+\d f\w\d t],$$
\noindent and
\begin{equation*}
\begin{split}
0&\equiv\d^*_L(e^{(\ga+i\de)t}(\eta+f\d t))=-*_L\d*_L(e^{(\ga+i\de)t}(\eta+f\d t))\\
&=-*_L\d[e^{(\ga+i\de)t}(*\eta\w\d t+fdvol_N)]\\
&=-*_L[(\ga+i\de)e^{(\ga+i\de)t}\d t\w fdvol_N+e^{(\ga+i\de)t}\d(*\eta\w\d t)]\\
&=-e^{(\ga+i\de)t}[(\ga+i\de)f+*\d*\eta]\\
&=-e^{(\ga+i\de)t}[(\ga+i\de)f-\d^*\eta].
\end{split}
\end{equation*}
This yields the following equations in $0$- and $1$-forms on $N$ respectively:
$$(\ga+i\de)f-\d^*\eta\equiv 0,$$
$$\d f-(\ga+i\de)\eta\equiv 0.$$

From here, we immediately see that if $\ga=0$ then $\de=0$, $\eta=0$ and $f\equiv 1$ yields a solution to the above system of equations, in which case $(\d+*_L\d^*_L)^p_{2+l,0}$ is not Fredholm. Assume now that $\ga+i\de\neq 0$. Since $N$ is compact, Hodge theory yields $\eta=(\eta_0, \eta_1,\eta_2)$ where $\eta_0$ is a harmonic $1$-form on $N$, $\eta_1$ is an exact $1$-form on $N$ and $\eta_2$ is a coexact $1$-form on $N$. Splitting the above system up by harmonic, exact and coexact $1$-forms on $N$ yields the following system of equations:
$$(\ga+i\de)\eta_0=0,$$
$$\d f-(\ga+i\de)\eta_1=0,$$
$$(\ga+i\de)\eta_2=0,$$
$$(\ga+i\de)f-\d^*\eta_1=0.$$

Because we are assuming that $\ga+i\de\neq 0$, $\eta_0,\eta_2=0$ which reduces the system to:
\begin{equation}
\d f=(\ga+i\de)\eta_1,
\label{system1}
\end{equation}
\begin{equation}
(\ga+i\de)f=\d^*\eta_1.
\label{system2}
\end{equation}
Notice that $f=0$ if and only if $\eta_1=0$. Since $f$ and $\eta$ cannot simultaneously be zero, neither can be zero.

Taking $\d^*$ of (\ref{system1}), then substituting (\ref{system2}) yields:
$$\d^*\d f=(\ga+i\de)\d^*\eta_1=(\ga+i\de)^2f,$$
so that $(\ga+i\de)^2$ is an eigenvalue of $\d^*\d$ on functions on $N$. Since such eigenvalues must be positive, this shows that $\de=0$, and so $\ga^2$ is an eigenvalue of $\d^*\d$ on functions on $N$. Conversely, assume $\De f=\d^*\d f=\ga^2 f$ for some nonzero $\ga\in\R$ and some smooth nonzero $f:N\to\C$; then $\eta=\ga^{-1}\d f$ is a smooth nonzero $1$-form on $N$ which satisfies the above equations.

Now, take $\d$ of (\ref{system2}) and substitute (\ref{system1}) to get:
$$\d\d^*\eta_1=(\ga+i\de)\d f=(\ga+i\de)^2\eta_1.$$
Because $\eta_1$ is an exact $1$-form on $N$, $(\d\d^*+\d^*\d)\eta_1=(\ga+i\de)^2\eta_1$ which shows that $(\ga+i\de)^2$ is an eigenvalue of $\d\d^*+\d^*\d=\d\d^*$ on exact $1$-forms on $N$, and so $\de=0$ by the same reasoning as above. Hence $\ga^2$ is an eigenvalue of $\d\d^*$ on exact $1$-forms on $N$. Conversely, assume that $\d\d^*\eta=\ga^2\eta$ for some nonzero $\ga\in\R$ and some smooth nonzero exact $1$-form $\eta$ on $N$; then $f=\ga^{-1}\d^*\eta$ is a smooth nonzero function on $N$ which satisfies (\ref{system1}) and (\ref{system2}).
\end{proof}

\section{Proof of Theorem \ref{sl1thm}}

Let $(X,g_X,\om,\Om)$ be an asymptotically cylindrical Calabi-Yau $3$-fold with decay rate $\al<0$, asymptotic to the cylindrical Calabi-Yau $3$-fold $(X_0,g_{X_0},\om_0,\Om_0)$, $X=M\times S^1\times \R$ where $M$ is a compact, connected $K3$ surface as in Definition \ref{acdfn1}. Let $K\subset X$ be a compact subset, $R>0$ and $\Psi:M\times S^1\times (R,\infty)\to X\setminus K$ a diffeomorphism with the following properties:
\begin{enumerate}
    \item $\Psi^*(\om_{X_0})=\om+\d\ze_1$, for some $\ze_1$, a $1$-form on $X$ with $|\na^k\ze_1|=O(e^{\al t})$ for all $k\geq 0$;
    \item $\Psi^*(\Om_{X_0})=\Om+\d\ze_2$, for some $\ze_2$, a complex $2$-form on $X$ with $|\na^k\ze_2|=O(e^{\al t})$ for all $k\geq 0$.
\end{enumerate}

Let $L$ be an asymptotically cylindrical special Lagrangian $3$-submanifold of $X$ with decay rate $\be$ ($\al\leq\be<0$), asymptotic to the cylindrical special Lagrangian $3$-submanifold $L_0=N\t\{p\}\t\R$ of $X_0$, where $N$ is a compact special Lagrangian $2$-submanifold of $M$ and $p\in S^1$ as in Definition \ref{acdfn4}. Let $K'\subset L$ be a compact subset, $R'>R$, $v$ a normal vector field on $N\t\{p\}\t (R',\infty)$ with $|\na^kv|=O(e^{\be t})$ for all $k\geq 0$ and $\Phi:N\t\{p\}\t (R',\infty)\to L\setminus K'$ a diffeomorphism making Diagram (\ref{aceq1}) commute.

Let $\ga<0$ be strictly less than $\be$ and be such that $(0,\ga^2]$ contains neither eigenvalues of the Laplacian $\De_N=\d^*\d$ on complex-valued functions on $N$ nor eigenvalues of the Laplacian $\De_N=\d\d^*$ on exact $1$-forms of $N$. Let $p>3$, $l\geq 1$ and the map $$(\d+*\d^*)^p_{2+l,\ga}: L^p_{2+l,\ga}(T^*L)\to L^p_{1+l,\ga}(\La^2T^*L)\op L^p_{1+l,\ga}(\La^3T^*L)$$ be as in Equation (\ref{operator}). By Theorem \ref{conditions}, the conditions on $\ga$ imply this operator is Fredholm, so the results of Theorem \ref{main} are applicable.

We begin by constructing an identification of small sections of the normal bundle of $L$ with $X$ near $L$ that is compatible with the data on these manifolds. Let $\nu_N$ be the normal bundle of $N$ in $M$ with exponential map $\exp_N:\nu_N\to M$; let $\ep>0$ such that $\exp_N:B_{2\ep}(\nu_N)\to T_N$ is a diffeomorphism of the subbundle $B_{2\ep}(\nu_N)$ whose fiber above each point is the ball of radius $2\ep$ about $0$, with a tubular neighborhood $T_N$ of $N$ in $M$. Then $B_{2\ep}(\nu_N)\t\{ p\}\t\R$ is a subbundle of the normal bundle $\nu_N\t\{p\}\t\R$, $T_N\t\{p\}\t\R$ is a tubular neighborhood (both of $N\t\{p\}\t\R$ in $M\t S^1\t\R$) and $\exp_N\t\io\t id:B_{2\ep}(\nu_N)\t\{p\}\t\R\to T_N\t\{p\}\t\R$ is a diffeomorphism, where $\io$ is the natural inclusion map and $id$ the identity map on $\R$. Notice that $v$ from above is a section of $\nu_N\t\{p\}\t (R',\infty)$, so since $v$ is decaying, we can assume that the graph of $v$ lies in $B_{2\ep}(\nu_N)\t\{p\}\t (R',\infty)$ (making $K',R'$ larger if necessary).

Let $\pi:B_{\ep}(\nu_N)\t\{p\}\t(R',\infty)\to N\t\{p\}\t(R',\infty)$ be the natural projection map, and define the map $\Xi:B_{\ep}(\nu_N)\t\{p\}\t (R',\infty)\to X\setminus K$ by $\Xi(w)=\Psi[(\exp_N\t\io\t id)(v|_{\pi(w)}+w)]$. First notice that since $\Xi$ is a composition of diffeomorphisms (onto their images), $\Xi$ is also a diffeomorphism (onto its image). Second, thinking of $N\t\{p\}\t(R',\infty)$ as the zero section of $B_{\ep}(\nu_N)\t\{p\}\t (R',\infty)$, $\Xi|_{N\t\{p\}\t (R',\infty)}=\Psi\circ(\exp_N\t\io\t id)|_{N\t\{p\}\t(R',\infty)}=\Psi\circ\exp_v=\Phi$ where the last two equalities follow from Definition \ref{acdfn4} and the commutativity of Diagram (\ref{aceq1}). Third, $\d\Xi:T(B_{\ep}(\nu_N)\t\{p\}\t (R',\infty))\to \Xi^*(T(X\setminus K))=\Phi^*(T(X\setminus K))$ is an isomorphism.

Since $\Phi:N\t\{p\}\t(R',\infty)\to L\setminus K'$ is a diffeomorphism, $\d\Phi:T(N\t\{p\}\t(R',)))\to T(L\setminus K')$ is an isomorphism. In fact, by the above discussion, $\d\Phi=\d\Xi|_{N\t\{p\}\t(R',\infty)}:T(B_{\ep}(\nu_N)\t \{p\}\t(R',\infty))|_{N\t\{p\}\t(R',\infty)}\to \Phi^*(T(L\setminus K'))$. Thus, define $\xi=\d\Xi|_{N\t\{p\}\t(R',\infty)}$, so that
\begin{equation}
\begin{split}
\xi&:\nu_N\t\{p\}\t(R',\infty)\cong \frac{T(B_{\ep}(\nu_N)\t\{p\}\t (R',\infty))|_{N\t\{p\}\t(R',\infty)}}{T(N\t\{p\}\t(R',\infty) )}\\
&\to\Phi^*(T(X\setminus K))/\Phi^*(T(L\setminus K'))\cong \Phi^*(T(X\setminus K)/T(L\setminus K'))=\Phi^*(\nu_L),\\
\label{xi}
\end{split}
\end{equation}
where $\nu_L$ is the normal bundle of $L$; $\xi$ is an isomorphism of the vector bundles $\nu_N\t\{p\}\t(R',\infty)$ and $\Phi^*(\nu_L)$ by construction.

Finally, let $\Th:B_{\ep'}(\nu_L)\to T_L$ denote an identification of $B_{\ep'}(\nu_L)$ (for some small $\ep'>0$) with $T_L$, a tubular neighborhood of $L$ in $X$, satisfying the following properties: first, thinking of $L$ as the zero section of $B_{\ep'}(\nu_L)$, $\Th|_L=id_L$; also, $\ep'$ should be small enough so that $\xi^*(\Phi^*(B_{\ep'}(\nu_L)))\subset B_{\ep}(\nu_N)\t\{p\}\t(R',\infty)\subset\nu_N\t\{p\}\t(R',\infty)$ and $\Th\circ\xi=\Xi$ on $\xi^*(\Phi^*(B_{\ep'}(\nu_L)))$. Notice that the first condition implies that $\d\Th|_L=id_{TL}:TL\subset T(B_{\ep'}(\nu_L))|_L\to TL\subset T(T_L)|_L$; the last condition uniquely defines $\Th$ and $T_L$ on $B_{\ep'}(\nu_L)|_{L\setminus K'}$, so one need only smoothly extend $\Th$ and $T_L$ to the compact subset $K'$ of $L$.

To summarize this construction, we have used the maps $\Xi$ and $\xi$ to define $\Th$, giving an identification of small sections of the normal bundle $\nu_L$ of $L$ with the ambient manifold $X$ near $L$, in such a way that is compatible with the diffeomorphisms $\Phi$ and $\Psi$. This allows us to identify small sections of $\nu_L$ with $3$-submanifolds of $X$ near $L$ and detect the asymptotic convergence of such a submanifold to $N\t\{p\}\t(R',\infty)$ by the asymptotic convergence of small sections of $\nu_L$ to zero. Further, since $\nu_L\cong TL$ on the special Lagrangian submanifold $L$, we will regard $\Th:B_{\ep'}(T^*L)\to T_L$.

This has the advantage that now smooth sections $\eta$ of the space $L^p_{2+l,\ga}(B_{\ep'}(T^*L))$ (note that this is an open Banach subspace of the Banach manifold $L^p_{2+l,\ga}(T^*L)$, so it is itself a Banach manifold) now correspond to smooth $3$-submanifolds of $X$ near $L$, and since $\eta$ is by definition a map $\eta:L\to L^p_{2+l,\ga}(B_{\ep'}(T^*L))$, $\Th\circ\eta:L\to T_L$. Hence we define our deformation map as $F:L^p_{2+l,\ga}(B_{\ep'}(T^*L))\to \La^2T^*L\op\La^3T^*L$, $F(\eta)=[(\Th\circ\eta)^*(-\om),(\Th\circ\eta)^*(\Im\Om)]$. Letting $\Ga_{\eta}$ denote the graph of $\eta$ in $L^p_{2+l,\ga}(B_{\ep'}(T^*L))$ and $\tilde{L}=\Th(\Ga_{\eta})$ its image in $X$, $\tilde{L}$ is special Lagrangian precisely when $\om|_{\tilde{L}}\equiv0$ and $\Im\Om|_{\tilde{L}}\equiv0$, but this is equivalent to $F(\eta)=(0,0)$. Thus, $F^{-1}(0,0)$ parameterizes the special Lagrangian $3$-submanifolds $\tilde{L}$ near $L$.

This completes our setup. Our first step now will be to prove that $F$ extends to the smooth map of Banach manifolds $F=F^p_{2+l,\ga}:L^p_{2+l,\ga}(B_{\ep'}(T^*L))\to L^p_{1+l,\ga}(\La^2T^*L)\op L^p_{1+l,\ga}(\La^3T^*L)$. We study this extension because its linearization at $0$ is given by the operator $\d F^p_{2+l,\ga}(0)=(\d+*\d^*)^p_{2+l,\ga}$ from above; moreover, we will show that $F$ actually maps into the image of $(\d+*\d^*)^p_{2+l,\ga}$. The point is to ultimately use these results to invoke the Implicit Mapping Theorem for Banach Manifolds (see, e. g., \cite[Theorem 1.2.5]{Joyc1}).

\begin{prop}
$F:L^p_{2+l,\ga}(B_{\ep'}(T^*L))\to L^p_{1+l,\ga}(\La^2T^*L)\op L^p_{1+l,\ga}(\La^3T^*L)$ is a smooth map of Banach manifolds with linearization at $0$ given by $\eta\mapsto(\d\eta,*\d^*\eta)$.
\label{Fmap}
\end{prop}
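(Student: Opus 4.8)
The plan is to realise $F$ as the composition operator attached to a fixed smooth bundle map and then invoke the standard regularity theory for such operators on weighted Sobolev spaces, as in \cite{JoSa}. Pulling a differential form back along a map is an algebraic operation on the $1$-jet of that map, and $\Th:B_{\ep'}(T^*L)\to T_L$ is a fixed smooth diffeomorphism; so, after fixing a connection on $T^*L$ to split jets, there is a fibre-preserving bundle map
\begin{equation*}
Q:B_{\ep'}(T^*L)\op(T^*L\ot T^*L)\longra\La^2T^*L\op\La^3T^*L,
\end{equation*}
polynomial in the $T^*L\ot T^*L$ variable and smooth in the $B_{\ep'}(T^*L)$ variable, with
\begin{equation*}
F(\eta)=Q(\,\cdot\,,\eta,\na\eta),\qquad Q(\,\cdot\,,0,0)=(-\om|_L,\Im\Om|_L)=(0,0);
\end{equation*}
the last equality holds because $L$ is special Lagrangian and $\d\Th|_L=id$. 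Since $(X,\om,\Om,g)$ is asymptotically cylindrical with $\om_0|_{L_0}=\Im\Om_0|_{L_0}=0$, the map $Q$ is itself asymptotically cylindrical: it has a translation-invariant model $Q_0$ on $N\t\{p\}\t\R$, built from $\om_0$, $\Om_0$ and the cylindrical limit of $\Th$, still satisfying $Q_0(\,\cdot\,,0,0)=0$, and (under the identification $\Phi$) $Q-Q_0$ together with all its fibrewise derivatives is $O(e^{\be t})$ on the end, uniformly over compact subsets of the fibres.

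With this reduction, the proposition is an instance of the weighted-Sobolev version of Palais's $\omega$-lemma, proved in the same framework in \cite{JoSa}: for a smooth asymptotically cylindrical fibre-preserving bundle map $Q$ with $Q(\,\cdot\,,0,0)\equiv 0$, the assignment $\eta\mapsto Q(\,\cdot\,,\eta,\na\eta)$ is a smooth map of Banach manifolds $L^p_{2+l,\ga}(B_{\ep'}(T^*L))\to L^p_{1+l,\ga}(\La^2T^*L)\op L^p_{1+l,\ga}(\La^3T^*L)$. The standing hypotheses $p>3=\dim L$ and $l\ge 1$ enter through Theorem \ref{embedding}, which supplies the continuous embedding $L^p_{2+l,\ga}\hookrightarrow C^{1+l}_\ga$: thus $\eta$ and its derivatives up to order $1+l$ are bounded and decaying, $L^p_{1+l,\ga}$ is a module over $L^p_{2+l,\ga}$, and the Leibniz expansion of $\na^j\bigl(Q(\,\cdot\,,\eta,\na\eta)\bigr)$ for $j\le 1+l$ — a finite sum of products of fibrewise derivatives of $Q$ evaluated along $(\,\cdot\,,\eta,\na\eta)$ with $\na^a\eta$, $a\le 2+l$ — can be estimated in $L^p_{1+l,\ga}$. (Continuity of $\eta\mapsto(\eta,\na\eta)$ into $C^{1+l}_\ga$ also shows that for small $\eta$ the graph stays inside $B_{\ep'}(T^*L)$, which is why $L^p_{2+l,\ga}(B_{\ep'}(T^*L))$ is open.) The vanishing $Q(\,\cdot\,,0,0)\equiv 0$ is exactly what places the image in $L^p_{1+l,\ga}$ and not merely in $L^p_{1+l,loc}$: writing $Q(x,a,b)=D_{(a,b)}Q(x,0,0)\!\cdot\!(a,b)+O(|a|^2+|b|^2)$, the linear term has coefficients that are bounded on the end (they converge to those of $Q_0$) multiplied by $\eta$ or $\na\eta\in L^p_{1+l,\ga}$, while the higher-order remainder lands in weighted spaces of weight $\le\ga$, which embed in $L^p_{1+l,\ga}$ because $\ga<0$. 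Smoothness of the operator follows by repeatedly differentiating under $Q$ and reusing the same multiplication estimates.

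It remains to identify the linearisation. Since $F(0)=(0,0)$,
\begin{equation*}
\d F(0)(\eta)=\frac{\pd}{\pd t}\Bigl(\Th\ci(t\eta)\Bigr)^*\!\bigl(-\om,\Im\Om\bigr)\Big|_{t=0}=\bigl(-\mathcal{L}_W\om|_L,\ \mathcal{L}_W(\Im\Om)|_L\bigr),
\end{equation*}
where $W$ is the variation vector field along $L$ of the family $t\mapsto\Th\ci(t\eta)$; because $\d\Th|_L=id$, $W$ agrees, up to a term tangent to $L$, with the normal field corresponding to $\eta$ under $\nu_L\cong T^*L$, and the tangential term contributes nothing since $\om|_L=\Im\Om|_L=0$. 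Cartan's formula and $\d\om=\d(\Im\Om)=0$ then reduce the right-hand side to $\bigl(-\d(i_W\om)|_L,\d(i_W(\Im\Om))|_L\bigr)$, and under the identifications $i_W\om|_L\leftrightarrow\eta$, $i_W(\Im\Om)|_L\leftrightarrow{*}\eta$ this equals $(\d\eta,\d{*}\eta)=(\d\eta,*\d^*\eta)$ — precisely McLean's computation reproduced in the proof of Theorem \ref{compact}. I expect all the genuine work to be in the second paragraph: showing the composition operator is well defined into $L^p_{1+l,\ga}$ and smooth between the weighted Banach manifolds. That rests squarely on the two structural inputs $Q(\,\cdot\,,0,0)\equiv 0$ (a consequence of the special Lagrangian conditions $\om_0|_{L_0}=\Im\Om_0|_{L_0}=0$) and $p>\dim L$, $l\ge1$ (making the weighted Sobolev spaces multiplicatively well behaved); the linearisation computation and the well-definedness of $F$ at the level of smooth sections are routine.
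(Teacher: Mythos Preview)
Your proposal is correct and takes essentially the same approach as the paper: both use the Sobolev embedding $L^p_{2+l,\ga}\hookrightarrow C^1_\ga$ from $p>3$, $l\ge1$, the asymptotic cylindricality of the identification $\Th$ (equivalently $\Xi$), and the vanishing of $\om$ and $\Im\Om$ along the zero section to put $F(\eta)$ in the weighted target, and both defer the linearisation to McLean's local computation in Theorem~\ref{compact}. Your packaging via the composition-operator/$\om$-lemma framework is somewhat more explicit about \emph{why} the vanishing $Q(\,\cdot\,,0,0)\equiv0$ is the key to landing in $L^p_{1+l,\ga}$ rather than merely $L^p_{1+l,\mathrm{loc}}$, whereas the paper argues this more tersely and asserts smoothness ``by construction'', but the substance is the same.
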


\begin{proof}
Begin by noting that our assumptions $p>3$ and $l\geq 1$ yield, by the Weighted Sobolev Embedding Theorem \ref{embedding}, the continuous inclusion $L^p_{2+l,\ga}\hookrightarrow C^1_{\ga}$, so that locally $F(\eta)\in L^p_{1+l}$. Now $F(\eta)$ is simply the restriction of $\Th^*(-\om)$, a $2$-form on $B_{\ep'}(T^*L)$, and of $\Th^*(\Im\Om)$, a $3$-form on $B_{\ep'}(T^*L)$, to $\Ga_{\eta}$. From the properties of $\Th$, this means that $F(\eta)$ is equal to $\Xi^*(-\om)$ and $\Xi^*(\Im\Om)$ on $B_{\ep'}(\nu_N)\t\{p\}\t(R',\infty)$; further, the asymptotic properties of $\Psi$, $\Phi$ and $v$  we built into $\Xi$ imply that $\Xi^*(-\om)$ is the sum of a translation-invariant $2$-form on $B_{\ep'}(\nu_N)\t\{p\}\t(R',\infty)$, the pullback of the negative of the cylindrical K\"ahler $2$-form $\om_0$ on $M\t S^1\t\R$ to $B_{\ep'}(\nu_N)\t\{p\}\t(R',\infty)$ and an error term which decays at a rate of $O(e^{\be t})$; similarly, $\Xi^*(\Im\Om)$ is the sum of a translation-invariant $3$-form on $B_{\ep'}(\nu_N)\t\{p\}\t(R',\infty)$, the pullback of the $3$-form $\Im\Om$ on $M\t S^1\t\R$ to $B_{\ep'}(\nu_N)\t\{p\}\t(R',\infty)$ and an error term which decays at a rate of $O(e^{\be t})$.

Hence because $\be<\ga$, $F(\eta)\in L^p_{1+l,\ga}(\La^2T^*L)\op L^p_{1+l,\ga}(\La^3T^*L)$. Smoothness of $F$ follows by construction, so the first claim follows. That the linearization of $F$ at $0$ is $(\d+*\d^*)^p_{2+l,\ga}$ follows exactly as in Theorem \ref{compact} since the calculation is local.
\end{proof}

\begin{lem}
The image of $F$ lies in exact $2$-forms and exact $3$-forms; specifically,
\begin{equation*}
\begin{split}
F(L^p_{2+l,\ga}(B_{\ep'}(T^*L)))&\subset \d(L^p_{1+l,\ga}(T^*L))\op\d(L^p_{1+l,\ga}(\La^2T^*L))\\
&\subset L^p_{l,\ga}(\La^2T^*L)\op L^p_{l,\ga}(\La^3T^*L).
\end{split}
\end{equation*}
\end{lem}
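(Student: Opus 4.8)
The plan is to adapt the cohomological argument from the compact case (Theorem \ref{compact}) to the weighted, noncompact setting, keeping careful track of decay rates. First I would note that $\om$ and $\Im\Om$ are closed forms on $X$, and that $F(\eta)$ is, by construction, the pullback of these closed forms to the submanifold $\ti L=\Th(\Ga_\eta)$, identified back to $L$ via $\Th\ci\eta$. Since $\d$ commutes with pullback, $\d F(\eta) = ((\Th\ci\eta)^*(-\d\om), (\Th\ci\eta)^*(\d\Im\Om)) = (0,0)$, so the image of $F$ consists of \emph{closed} $2$- and $3$-forms. The content of the lemma is upgrading ``closed'' to ``exact'' in the weighted spaces $L^p_{l,\ga}$ with $\ga<0$.

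For exactness I would argue as in the compact case that $\Th\ci\eta$ is homotopic (through the family $\Th\ci(s\eta)$, $s\in[0,1]$) to the inclusion $\io:L\hookrightarrow X$, so the two pullback maps agree on de Rham cohomology; since $L$ is special Lagrangian, $[\io^*\om]=[\om|_L]=0$ and $[\io^*\Im\Om]=[\Im\Om|_L]=0$, hence the components of $F(\eta)$ are exact as ordinary smooth forms on $L$. The remaining —and genuinely new— point is to produce primitives lying in the \emph{correctly weighted} space: I must show that the exact $2$-form $a:=(\Th\ci\eta)^*(-\om)\in L^p_{l,\ga}(\La^2T^*L)$ can be written as $\d u$ with $u\in L^p_{1+l,\ga}(T^*L)$, and similarly for the $3$-form component. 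This is where I would invoke the Fredholm machinery: because $\ga$ satisfies the hypotheses of Theorem \ref{conditions}, the operator $(\d+*\d^*)^p_{2+l,\ga}$ is Fredholm, and by Theorem \ref{main} its cokernel is dual to $\ker((\d^*+\d*)^q_{2+m,-\ga})$. Concretely, to solve $\d u = a$ in the weighted space it suffices to check that $a$ is $L^2$-orthogonal to every element of the relevant cokernel — that is, to the smooth coclosed $2$-forms $\eta_2$ in $\ker((\d^*+\d*)^q_{2+m,-\ga})$ — and this orthogonality is exactly the statement that $[a]=0$ in the appropriate cohomology, which we have just established. One must also confirm the pairing $\langle a,\eta_2\rangle_{L^2}$ converges, which follows from $a\in L^p_{l,\ga}$ with $\ga<0$ and $\eta_2$ bounded (both decaying, in fact, since $\eta_2$ lies in a weighted kernel). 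Alternatively, and perhaps more cleanly, one can run the construction of \cite[Proposition 3.9]{JoSa} type: the exact forms in a weighted space that is ``below'' all the critical rates coincide with $\d$ of the next weighted space, precisely because no cohomology is ``lost'' across the interval $(\ga,0)$ containing no elements of $\D_{(\d+*\d^*)_0}$.

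The main obstacle I anticipate is not the cohomological vanishing — that is a routine homotopy argument — but the bookkeeping needed to guarantee the primitive lands in $L^p_{1+l,\ga}$ rather than in some larger space $L^p_{1+l,\ga'}$ with $\ga<\ga'<0$, or worse in an unweighted space. The hypotheses on $\ga$ (strictly less than $\be$, and $(0,\ga^2]$ free of the relevant Laplace eigenvalues on $N$) are exactly what is needed to avoid this: they ensure $\ga\notin\D_{(\d+*\d^*)_0}$, so the weighted Hodge-type decomposition is available, and there is no jump in the image of $\d$ as the weight moves from $\ga$ up toward $0$. A secondary technical point is regularity: once $u$ is obtained in $L^p_{1+l,\ga}$ it is automatically as smooth as claimed by elliptic regularity (Theorem \ref{reg}) applied to $\d+*\d^*$ together with a gauge-fixing choice $\d^*u=0$ if one wants a canonical primitive, though for the mere inclusion statement of the lemma this is not needed. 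Finally I would record that the second displayed inclusion, $\d(L^p_{1+l,\ga}(T^*L))\op\d(L^p_{1+l,\ga}(\La^2T^*L))\subset L^p_{l,\ga}(\La^2T^*L)\op L^p_{l,\ga}(\La^3T^*L)$, is immediate since $\d$ drops differentiability by one and preserves the weight.
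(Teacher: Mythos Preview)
Your plan has a genuine gap at the step where you assert ``this orthogonality is exactly the statement that $[a]=0$ in the appropriate cohomology, which we have just established.'' The homotopy argument gives $[a]=0$ only in ordinary de Rham cohomology $H^2(L,\R)$, i.e.\ $a=\d u'$ for some smooth but a priori \emph{undecaying} $u'$. To deduce $\langle a,\eta_2\rangle_{L^2}=0$ for a coclosed $\eta_2\in L^q_{2+m,-\ga}$ you would integrate by parts to $\langle u',\d^*\eta_2\rangle=0$, but the boundary term at infinity is $\lim_{T\to\infty}\int_{N\times\{T\}}u'\wedge *\eta_2$; since $\eta_2$ lives at the \emph{growing} weight $-\ga>0$ and $u'$ carries no decay, this limit need not vanish. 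Vanishing in $H^2(L,\R)$ is strictly weaker than vanishing in the weighted sense that pairs with the cokernel, and bridging the two is precisely the content of the lemma --- so the Fredholm route, as written, is circular. (A secondary issue: for the $3$-form component you would need an operator acting on $2$-forms, not $(\d+*\d^*)$ on $1$-forms.)

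The paper sidesteps this by producing primitives \emph{upstairs} before pulling back. Since the tubular neighbourhood $T_L$ retracts onto $L$ and $\om|_L\equiv 0$, $\Im\Om|_L\equiv 0$, one has $\om|_{T_L}=\d\tau_1$ and $\Im\Om|_{T_L}=\d\tau_2$ for forms $\tau_i$ on $T_L$; crucially these can be chosen with $\tau_i|_L\equiv 0$ and with the same $O(e^{\be t})$ asymptotic structure that $\om,\Im\Om$ enjoy. Then $F(\eta)=\bigl(\d[(\Th\ci\eta)^*(-\tau_1)],\,\d[(\Th\ci\eta)^*(\tau_2)]\bigr)$, and the pulled-back primitives $(\Th\ci\eta)^*(\tau_i)$ land in $L^p_{1+l,\ga}$ by the same estimate that proved Proposition~\ref{Fmap}. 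Your homotopy $\Th\ci(s\eta)$, had you applied the Cartan homotopy formula to it rather than only extracting the cohomological consequence, would have produced exactly such a decaying primitive --- essentially the paper's $\tau_i$ pulled back --- so the repair is to exploit the homotopy at the level of forms, not merely of cohomology classes.
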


\begin{proof}
Recall that $\om$ and $\Im\Om$ are closed forms, so they determine the de Rham cohomology classes $[\om]$ and $[\Im\Om]$. In particular, $\om|_L\equiv 0$ and $\Im\Om|_L\equiv 0$ since $L$ is special Lagrangian, so $[\om|_L]=0$ and $[\Im\Om|_L]=0$; moreover, since $T_L$, the tubular neighborhood of $L$ from above, retracts onto $L$, $[\om|_{T_L}]=[\om|_L]$ and $[\Im\Om|_{T_L}]=[\Im\Om|_L]$. Thus, there exists $\tau_1\in C^{\infty}(T^*T_L)$ such that $\om|_{T_L}=\d\tau_1$ and $\tau_2\in C^{\infty}(\La^2T^*T_L)$ such that $\Im\Om|_{T_L}=\d\tau_2$. Now, since $\om|_{L}\equiv 0$, we can assume that $\tau_1|_L\equiv 0$; second, because $\om$ and all its derivatives decay at a rate $O(e^{\be t})$ to the translation-invariant $2$-form $\om_0$ on $M\t S^1\t\R$, we can assume that $\tau_1$ and all its derivatives decay at a rate $O(e^{\be t})$ to a translation invariant $1$-form on $T_N\t\{p\}\t\R$. We can make similar assumptions regarding $\tau_2$ based on the properties of $\Im\Om$.

From this, we calculate the following for $\eta\in L^p_{2+l,\ga}(T^*L)$:
\begin{equation*}
\begin{split}
F(\eta)&=((\Th\circ\eta)^*(-\om),(\Th\circ\eta)^*(\Im\Om))\\
&=((\Th\circ\eta)^*(-\d\tau_1),(\Th\circ\eta)^*(\d\tau_2))\\
&=(\d(\Th\circ\eta)^*(-\tau_1)),\d((\Th\circ\eta)^*(\tau_2))).
\end{split}
\end{equation*}
The result now follows by Proposition \ref{Fmap}.
\end{proof}

\begin{prop}
Let $\mathcal{C}$ denote the image of the operator $(\d+*\d^*)^p_{2+l,\ga}$. Then $$F:L^p_{2+l,\ga}(B_{\ep'}(T^*L))\to\mathcal{C}.$$
\label{image}
\end{prop}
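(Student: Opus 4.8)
The plan is to exploit that $(\d+*\d^*)^p_{2+l,\ga}$ is Fredholm (which holds here by Theorem \ref{conditions}): its image $\mathcal{C}$ is then a \emph{closed} subspace of $L^p_{1+l,\ga}(\La^2T^*L)\op L^p_{1+l,\ga}(\La^3T^*L)$, and by the standard duality for bounded operators with closed range, $\mathcal{C}$ is exactly the annihilator of the kernel of the dual operator. By the remarks following Lemma \ref{ker}, that dual kernel may be taken to be $\ker((\d^*+\d*)^q_{2+m,-\ga})$, which is independent of $m\ge0$ and consists of smooth forms; the bounded functional on $L^p_{1+l,\ga}(\La^2T^*L)\op L^p_{1+l,\ga}(\La^3T^*L)$ represented by such a smooth $(\eta_2,\eta_3)$ is simply the $L^2$ pairing, convergent by weighted H\"older since the weights $\ga$ and $-\ga$ are opposite. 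Thus it suffices to prove $\langle F(\eta),(\eta_2,\eta_3)\rangle_{L^2}=0$ for every $(\eta_2,\eta_3)\in\ker((\d^*+\d*)^q_{2+m,-\ga})$.

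By the preceding Lemma, $F(\eta)=(\d\al,\d\be)$ with $\al\in L^p_{1+l,\ga}(T^*L)$ and $\be\in L^p_{1+l,\ga}(\La^2T^*L)$; since $p>3$ and $l\ge1$, Theorem \ref{embedding} gives $\al,\be\in C^0_\ga$, so $|\al|,|\be|=O(e^{\ga t})$ on the cylindrical end, and likewise $\d\al,\d\be\in C^0_\ga$. By Theorem \ref{main}, $\eta_2$ is a smooth coclosed $2$-form and $\eta_3$ a smooth harmonic $3$-form. The $\eta_3$-contribution is immediate: since $L$ is connected and $*\eta_3$ is a closed $0$-form, $*\eta_3\equiv c$ for a constant $c$, so $\eta_3=c\,dvol_L$ and, by Stokes' theorem on the compact exhaustion $L_T=\rho^{-1}((-\iy,T])$ (whose boundary is a union of copies of $N$ for $T$ large) together with the decay of $\be$, $\int_L\langle\d\be,\eta_3\rangle\,dvol_L=c\int_L\d\be=\lim_{T\to\iy}c\int_{\pd L_T}\be=0$. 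For the $\eta_2$-contribution I would write $\langle\d\al,\eta_2\rangle\,dvol_L=\d\al\w{*}\eta_2=\d(\al\w{*}\eta_2)$ (valid since coclosedness of $\eta_2$ forces $\d{*}\eta_2=0$ on the $3$-manifold $L$), apply Stokes on $L_T$, and reduce to proving $\int_{\pd L_T}\al\w{*}\eta_2\to0$.

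The one substantive point --- and the main obstacle --- is controlling the growth of $\eta_2$ on the cylindrical end, since a priori it lies only in the \emph{positive}-weight space $L^q_{2+m,-\ga}$ and could grow. Here the smallness of $\ga$ is essential: for each $\mu\in(0,-\ga]$ one has $\mu^2\in(0,\ga^2]$, which by hypothesis contains no eigenvalue of $\De=\d^*\d$ on functions on $N$ nor of $\De=\d\d^*$ on exact $1$-forms on $N$, so by Theorem \ref{conditions} the operator $(\d+*\d^*)^p_{2+l,-\mu}$ is Fredholm; dualizing, the corresponding extension of $\d^*+\d*$ at weight $\mu$ is Fredholm, i.e.\ $\mu\notin\D_{(\d^*+\d*)_0}$. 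Hence the whole interval $(0,-\ga]$ is disjoint from the discrete set $\D_{(\d^*+\d*)_0}$ and so contained in one connected component of $\R\sm\D_{(\d^*+\d*)_0}$; since the kernel of an asymptotically cylindrical elliptic operator is invariant under weight changes within a component (the remark before Lemma \ref{ker}), $\ker((\d^*+\d*)^q_{2+m,-\ga})=\ker((\d^*+\d*)^q_{2+m,\mu})$ for every such $\mu$. Choosing $m$ large and applying Theorem \ref{embedding} then gives $|\eta_2|=O(e^{\mu t})$ on the end for every $\mu\in(0,-\ga]$. Fixing $0<\mu<-\ga$, the form $\al\w{*}\eta_2$ is $O(e^{(\ga+\mu)t})$ with $\ga+\mu<0$, so $\int_{\pd L_T}\al\w{*}\eta_2=O(e^{(\ga+\mu)T})\to0$ (as $\pd L_T\cong N$ has fixed volume), and the same estimate shows $\d\al\w{*}\eta_2\in L^1(L)$, justifying the exhaustion argument. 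Therefore $\langle F(\eta),(\eta_2,\eta_3)\rangle_{L^2}=0$, so $F(\eta)\in\mathcal{C}$, proving the proposition.

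In short, identifying $\mathcal{C}$ as an annihilator, the integration-by-parts manipulations, and the $\eta_3$-term are routine; the real work is the weight-shifting estimate for $\eta_2$, which is precisely where the eigenvalue hypothesis on the Laplacians of $N$ gets used.
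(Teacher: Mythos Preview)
Your proof is correct and follows the same overall strategy as the paper: identify $\mathcal{C}$ as the annihilator of $\ker((\d^*+\d*)^q_{2+m,-\ga})$, write $F(\eta)=(\d\al,\d\be)$ via the preceding Lemma, and use that the kernel elements are coclosed to kill the pairing. The difference is entirely in how the integration by parts is justified.

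The paper dispatches this in one line: $\langle\d\tau_i,\chi_i\rangle_{L^2}=\langle\tau_i,\d^*\chi_i\rangle_{L^2}=0$. This is legitimate because, as set up in the paragraph after Lemma~\ref{ker}, $L^q_{-l,-\ga}$ is the Banach dual of $L^p_{l,\ga}$ and $\d^*$ at weight $-\ga$ is precisely the dual of $\d$ at weight $\ga$; equivalently, approximate $\tau_i$ in $L^p_{1+l,\ga}$ by compactly supported forms and pass to the limit, the pairings on both sides being continuous since the weights $\ga$ and $-\ga$ cancel under H\"older. No growth estimate on $\chi_i$ beyond membership in $L^q_{2+m,-\ga}$ is needed.

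By contrast, you carry out Stokes on an exhaustion $L_T$ and must then control the boundary term $\int_{\pd L_T}\al\w{*}\eta_2$; since $\eta_2$ a priori lives in a positively weighted space, you are led to the weight-shifting argument showing $\ker((\d^*+\d*)^q_{2+m,\cdot})$ is constant on $(0,-\ga]$. That argument is sound, but it re-derives by hand what the duality framework already encodes. In particular, your remark that ``the eigenvalue hypothesis on the Laplacians of $N$ is precisely what gets used here'' overstates things: in the paper's argument that hypothesis is used only once, via Theorem~\ref{conditions}, to guarantee Fredholmness (hence closed range and the cokernel identification), and is not invoked again in the pairing computation.
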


\begin{proof}
By Theorem \ref{main}, $\coker((\d+*\d^*)^p_{2+l,\ga})\cong(\ker((\d^*+\d*)^q_{2+m,-\ga}))^*$ with $\frac{1}{p}+\frac{1}{q}=1$ and $m\geq 1$, so $F(\eta)\in\mathcal{C}$ if and only if $$\langle F(\eta),(\chi_1,\chi_2)\rangle_{L^2}\equiv0\text{ for all }(\chi_1,\chi_2)\in\ker((\d^*+\d*)^q_{2+m,-\ga}).$$ By the previous lemma $F(\eta)=(\d\tau_1,\d\tau_2)$ for some $\tau_1\in L^p_{1+l,\ga}(T^*L)$ and $\tau_2\in L^p_{1+l,\ga}(\La^2T^*L)$; then Theorem \ref{main} implies $\chi_1$ and $\chi_2$ are coclosed $2$- and $3$-forms respectively which yields the following:
\begin{equation*}
\begin{split}
\langle F(\eta),(\chi_1,\chi_2)\rangle_{L^2}&=\langle \d\tau_1,\chi_1\rangle_{L^2}+\langle\d\tau_2,\chi_2\rangle_{L^2}\\
&=\langle \tau_1,\d^*\chi_1\rangle_{L^2}+\langle\tau_2,\d^*\chi_2\rangle_{L^2}=0.
\end{split}
\end{equation*}
\end{proof}

The next step is to use the Implicit Mapping Theorem for Banach Spaces. Let $\mathcal{A}=\ker((\d+*\d^*)^p_{2+l,\ga})$ and $\mathcal{B}$ denote the subspace of $L^p_{2+l,\ga}(T^*L)$ that is $L^2$-orthogonal to $\mathcal{A}$. Because $\mathcal{A}$ is finite-dimensional and the $L^2$-inner product is continuous on $L^p_{2+l,\ga}(T^*L)$, $\mathcal{A}$ and $\mathcal{B}$ are Banach spaces such that $\mathcal{A}\op\mathcal{B}=L^p_{2+l,\ga}(T^*L)$. Choose open neighborhoods $\mathcal{U}$, $\mathcal{V}$ of $0$ in $\mathcal{A}$, $\mathcal{B}$, respectively, such that $\mathcal{U}\t\mathcal{V}\subset L^p_{2+l,\ga}(B_{\ep'}(T^*L))$. Then, by Proposition \ref{Fmap} and Proposition \ref{image}, $F:\mathcal{U}\t\mathcal{V}\to\mathcal{C}$ is a smooth map of Banach manifolds, $F(0,0)=(0,0)$ and $\d F(0,0)=(\d+*\d^*)^p_{2+l,\ga}:\mathcal{A}\op\mathcal{B}\to\mathcal{C}$; moreover, $(\d+*\d^*)^p_{2+l,\ga}|_{\mathcal{B}}:\mathcal{B}\to\mathcal{C}$ is an isomorphism of vector spaces by construction, and it is a homeomorphism of topological spaces by the Open Mapping Theorem. The Implicit Mapping Theorem for Banach Spaces now guarantees the existence of a connected open neighborhood $\mathcal{U'}\subset\mathcal{U}$ of $0$ and a smooth function $G:\mathcal{U'}\to\mathcal{V}$ such that $G(0)=0$ and $F(x)=(x,G(x))\equiv(0,0)$ for all $x\in\mathcal{U'}$. Hence we conclude that near $(0,0)$, $F^{-1}(0,0)=\{(x,G(x)):x\in\mathcal{U'}\}$, so that $F^{-1}(0,0)$ is smooth, finite-dimensional and locally isomorphic to $\mathcal{A}=\ker((\d+*\d^*)^p_{2+l,\ga})$.

The last part of the proof consists of defining a map from $F^{-1}(0,0)$ to the moduli space $M^{\ga}_L$ of asymptotically cylindrical special Lagrangian deformations of $L$ near $L$; however, one technical step involved in showing the map is well defined is to show that the sections $\eta$ in $F^{-1}(0,0)$ are smooth. Theorem \ref{sl1thm} will then follow from these results, Theorem \ref{main} and the fact that $F^{-1}(0,0)$ is smooth, finite-dimensional and locally isomorphic to $\ker((\d+*\d^*)^p_{2+l,\ga})$.

\begin{lem}
If $\eta\in F^{-1}(0,0)$, then $\eta\in L^p_{2+m,\ga}(T^*L)$ for all $m\geq 1$.
\label{smooth}
\end{lem}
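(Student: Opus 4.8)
The plan is to establish the smoothness of solutions $\eta \in F^{-1}(0,0)$ by a bootstrapping argument built on the elliptic regularity estimate of Theorem \ref{reg}. The starting point is that $\eta \in L^p_{2+l,\ga}(T^*L)$ by hypothesis, and by the Weighted Sobolev Embedding Theorem \ref{embedding} (with $p>3$, $l\ge 1$) we have $\eta \in C^1_{\ga}$, so $F(\eta)$ makes sense pointwise and indeed $F(\eta)=(0,0)$. First I would rewrite the equation $F(\eta)=(0,0)$ as an equation of the form $(\d+*\d^*)^p_{2+m,\ga}\eta = Q(\eta)$, where $Q(\eta)$ collects the nonlinear terms: since $F$ is a smooth map of Banach manifolds with linearization $(\d+*\d^*)^p_{2+l,\ga}$ at $0$ (Proposition \ref{Fmap}), we may write $F(\eta) = (\d+*\d^*)\eta + Q(\eta)$ where $Q$ vanishes to second order at $\eta=0$ and $Q$ is built from the pulled-back forms $\Th^*(\om)$, $\Th^*(\Im\Om)$, whose coefficients are smooth functions of $\eta$ and its first derivatives (quadratically and higher). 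Thus $F(\eta)=0$ gives $(\d+*\d^*)\eta = -Q(\eta)$, and the key point is that $Q(\eta)$ depends on $\eta$ and $\na\eta$ only, with smooth coefficient functions, so if $\eta \in L^p_{2+m,\ga}$ then $Q(\eta) \in L^p_{1+m,\ga}$ (using that $L^p_{2+m,\ga}$ with $p>3$ is a Banach algebra-type space closed under the relevant pointwise products once one controls the lower-order factors via the embedding into $C^1_\ga$).

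The induction then runs as follows. Suppose inductively that $\eta \in L^p_{2+m,\ga}(T^*L)$ for some $m \ge l$ (the base case $m=l$ being the hypothesis). From the structure of $Q$ just described, $Q(\eta) \in L^p_{1+m,\ga}(\La^2T^*L)\op L^p_{1+m,\ga}(\La^3T^*L)$; here one must check carefully that the nonlinear terms do not lose derivatives, i.e. that every term in $Q(\eta)$ is a smooth function of $\eta$ times at most one derivative of $\eta$ — this is exactly the situation in McLean's original computation and in \cite{JoSa}, since pulling back $\om$ and $\Im\Om$ under the graph map introduces only first derivatives of $\eta$. Then the equation $(\d+*\d^*)\eta = -Q(\eta)$ has right-hand side in $L^p_{1+m,\ga}$, so $\eta \in L^p_{2+m,loc}(T^*L)$ satisfies the hypotheses of the elliptic regularity Theorem \ref{reg} (applied with $l$ there equal to $1+m$), giving
\begin{equation*}
\nm{\eta}_{L^p_{2+(1+m),\ga}} \le C\bigl(\nm{(\d+*\d^*)\eta}_{L^p_{1+m,\ga}} + \nm{\eta}_{L^p_{1+m,\ga}}\bigr) < \infty,
\end{equation*}
hence $\eta \in L^p_{2+(m+1),\ga}(T^*L)$. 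This advances the induction, and therefore $\eta \in L^p_{2+m,\ga}(T^*L)$ for all $m \ge 1$, as claimed. (Full smoothness of $\eta$ as a section, needed for the moduli space discussion, then follows from the Weighted Sobolev Embedding Theorem applied for all $m$, giving $\eta \in C^k_\ga$ for every $k$.)

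The main obstacle I expect is the bookkeeping in the inductive step showing that $Q(\eta) \in L^p_{1+m,\ga}$ when $\eta \in L^p_{2+m,\ga}$: one needs that the weighted Sobolev spaces $L^p_{j,\ga}$ for $j \ge 2$, $p > 3$, behave well under multiplication (so that products of an $L^p_{j,\ga}$ function with a smooth bounded function, and products of lower-order derivative terms, stay in the right space), and that the map $\eta \mapsto Q(\eta)$ genuinely involves no more than one derivative of $\eta$ in each monomial. The weight $\ga < 0$ helps rather than hurts here, since the nonlinear terms are at least quadratic in the decaying quantity $\eta$ and hence decay faster than $e^{\ga t}$; the precise statement is that $Q(\eta)$ lies in $L^p_{1+m, \ga'}$ for some $\ga' $ slightly more negative than $\ga$, which in particular embeds into $L^p_{1+m,\ga}$. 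Once this multiplicative/derivative-counting lemma is in hand, the elliptic bootstrap is routine and the conclusion is immediate.
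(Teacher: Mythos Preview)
Your bootstrap does not advance the induction. The operator $\d+*\d^*$ is of order $k=1$, so Theorem~\ref{reg} with $l=1+m$ yields only
\[
\nm{\eta}_{L^p_{1+(1+m),\ga}}=\nm{\eta}_{L^p_{2+m,\ga}}\le C\bigl(\nm{(\d+*\d^*)\eta}_{L^p_{1+m,\ga}}+\nm{\eta}_{L^p_{1+m,\ga}}\bigr),
\]
not $\nm{\eta}_{L^p_{2+(1+m),\ga}}$ as you wrote. The slip is that the subscript ``$2+l$'' in \eq{operator} records the regularity of the domain, not the order of the operator; the operator is still first order. Since $Q(\eta)=F(\eta)-(\d+*\d^*)\eta$ genuinely depends on $\na\eta$ (the pullback of a form along the graph of $\eta$ involves $\na\eta$ nonlinearly, e.g.\ terms of type $(\na\eta)^2$), the best you can say from $\eta\in L^p_{2+m,\ga}$ is $Q(\eta)\in L^p_{1+m,\ga}$, and first-order elliptic regularity returns exactly $\eta\in L^p_{2+m,\ga}$. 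No derivative is gained; the induction stalls at the base case.

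This is precisely why the paper does \emph{not} bootstrap against the fixed smooth-coefficient operator $\d+*\d^*$. Instead it applies $\De_L$ to the equation $H(x,\eta,\na\eta)=0$ to produce a \emph{third}-order equation whose principal part $\tilde H_\eta(\si)=(\pd^zH)(x,\eta,\na\eta)\cdot\De_L\na\si$ has coefficients depending only on $(\eta,\na\eta)$, while the remainder $E$ depends on $\eta$ only through $\na^2\eta$. Now there is a genuine gap in orders: if $\eta\in L^p_{2+m,\ga}$ then $E\in L^p_{m,\ga}$, and elliptic regularity for the third-order operator $\tilde H_\eta$ gives $\eta\in L^p_{3+m,\ga}$, gaining one derivative. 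The price is that $\tilde H_\eta$ has coefficients which are only $L^p_{1+m}$ rather than smooth, so Theorem~\ref{reg} does not apply directly; the paper invokes Morrey's interior estimates for elliptic operators with H\"older-continuous coefficients (available here because $p>3$ gives $\eta\in C^{1+m,\al}$, hence $C^{m,\al}$ coefficients for $\tilde H_\eta$). Your approach, as written, would need either this quasilinear mechanism or some other device that separates the highest-order part of the equation from terms of strictly lower order in~$\eta$.
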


\begin{proof}
We begin by noting that the functional form of $F(\eta)$ is given by $H(x,\eta|_x,\na\eta|_x)$ where $x\in L$ and $H$ is a smooth function. Fix $m\geq 1$, and let $\na$ denote the Levi-Civit\`a connection of $g_L$ on $L$. We are going to apply the Laplacian $\De_L=g_L^{ij}\na_i\na_j$ to $F$ which will allow us to split $F$ up in such a way that we can use a regularity result (which we will prove in the course of this argument) to increase the regularity of $\eta$.

Let $\na^x$ denote the derivative in the $x$-direction; let $\pd^y$ and $\pd^z$ denote the derivatives in the $y$- and $z$-directions respectively, where $y=\eta$ and $z=\na\eta$. Then

$$\De_L(F(\eta))=g^{ij}_L\na_i\na_j(H(x,y,z))$$
$$=g^{ij}_L\na_i[(\na^x_jH)(x,y,z)+(\pd^yH)(x,y,z)\cdot \na_j\eta+(\pd^zH)(x,y,z)\cdot\na_j\na\eta]$$
$$=g^{ij}_L[(\na^x_i\na^x_jH)(x,y,z)+(\na^x_i\pd^yH)(x,y,z)\cdot\na_j\eta +(\pd^yH)(x,y,z)\cdot\na_i\na_j\eta$$
$$+(\na^x_i\pd^zH)(x,y,z)\cdot\na_j\na\eta+(\pd^zH)(x,y,z)\cdot\na_i\na_j\na\eta$$ $$+(\pd^y\na^x_jH)(x,y,z)\cdot\na_i\eta+(\pd^y\pd^yH)(x,y,z)\cdot (\na_i\eta\ot\na_j\eta)$$
$$+(\pd^y\pd^zH)(x,y,z)\cdot(\na_i\eta\ot\na_j\na\eta) +(\pd^z\na^x_jH)(x,y,z)\cdot\na_i\na\eta$$
$$+(\pd^z\pd^yH)(x,y,z)\cdot (\na_i\na\eta\ot\na_j\eta)+(\pd^z\pd^zH)(x,y,z)\cdot(\na_i\na\eta\ot\na_j \na\eta)]$$
$$=(\pd^zH)(x,y,z)\cdot\De_L\na\eta+(\pd^yH)(x,y,z)\cdot\De_L\eta +g^{ij}_L[(\na^x_i\na^x_jH)(x,y,z)$$
$$+(\na^x_i\pd^yH)(x,y,z)\cdot\na_j\eta +(\na^x_i\pd^zH)(x,y,z)\cdot\na_j\na\eta$$
$$+(\pd^y\na^x_jH)(x,y,z)\cdot\na_i\eta+(\pd^y\pd^yH)(x,y,z)\cdot (\na_i\eta\ot\na_j\eta)$$
$$+(\pd^y\pd^zH)(x,y,z)\cdot(\na_i\eta\ot\na_j\na\eta) +(\pd^z\na^x_jH)(x,y,z)\cdot\na_i\na\eta$$
$$+(\pd^z\pd^yH)(x,y,z)\cdot (\na_i\na\eta\ot\na_j\eta)+(\pd^z\pd^zH)(x,y,z)\cdot(\na_i\na\eta\ot\na_j \na\eta)].$$
\newline
Notice that $\De_L$ splits $F$ into two pieces: the only term $(\pd^zH)(x,y,z)\cdot\De_L\na\eta$ involving the third derivatives of $\eta$, and everything else which depends only on $\eta$ up to its second derivatives, which we will denote from now on by $E(x,\eta,\na\eta,\na^2\eta)$. Now, for $\eta$ fixed, let $\tilde{H}_{\eta}:L^p_{3+s}(T^*L)\to L^p_s(\La^2T^*L)\op L^p_s(\La^3T^*L)$ denote the map defined by $\si\mapsto\tilde{H}_{\eta}(\si)=(\pd^zH)(x,\eta|_x,\na\eta|_x) \cdot\De_L\na\si|_x$, so that $\tilde{H}_{\eta}$ is a third-order linear elliptic operator ($\tilde{H}_{\eta}$ is basically $\De_L\d$ with $\d$ the exterior derivative operator). Further, since the coefficients of $\tilde{H}_{\eta}$ only depend on $\eta$ and $\na\eta$, they are $L^p_{1+m}$ locally, and so the maximum regularity we can get from $\tilde{H}_{\eta}(\si)$ is $L^p_{1+m}$; this forces us to take $s$ such that $0\leq s\leq m+1$. Of course, since the coefficients of $\tilde{H}_{\eta}$ are only $L^p_{1+m}$ locally (rather than smooth) we cannot use the elliptic regularity result, Theorem \ref{reg}, so we need the following result:

\begin{lem}
Assume $\si\in L^p_{3}(T^*L)$ and $\tilde{H}_{\eta}(\si)\in L^p_{m}(\La^2T^*L)\op L^p_{m}(\La^3T^*L)$. Then $\si\in L^p_{3+m}(T^*L)$, and there exists a constant $\tilde{C}>0$ such that $$\Vert \si\Vert_{L^p_{3+m,\ga}}\leq \tilde{C}(\Vert \tilde{H}_{\eta}(\si)\Vert_{L^p_{m,\ga}}+\Vert \si\Vert_{L^p_{3,\ga}}).$$
\end{lem}

\begin{proof}
We will use results from Morrey \cite[Section 6.2]{Morr} to prove this lemma. If the coefficients of $\tilde{H}_{\eta}$ are $C^m$, then \cite[Theorem 6.2.5]{Morr} guarantees that $\si$ is locally $L^p_{3+m}$. Further, \cite[Theorem 6.2.6]{Morr} provides a local interior estimate of the form above where $\tilde{C}>0$ depends on $m,p$, the domains involved, $C^m$-bounds on the coefficients of $\tilde{H}_{\eta}$ and a modulus of continuity for their $m$-th derivatives. Notice that if we have H\"older $C^{0,\al}$, $\al\in(0,1)$, bounds for the $m$-th derivatives, we get our modulus of continuity; we can further simplify the problem to finding $C^{1+m,\al}$ bounds for $\eta$ since this will give $C^{m,\al}$ bounds for $\tilde{H}_{\eta}$ giving us the desired bounds. Recall that we are assuming $p>3$, so that with $\al=1-p/3$, the Sobolev Embedding Theorem embeds $L^p_{2+m}$ into $C^{1+m,\al}$. Since $\eta\in L^p_{2+m,\ga}$ and $\ga<0$, we get the desired control on $\eta$ which yields the modulus of continuity and the result.
\end{proof}

To finish the proof, let $\eta\in F^{-1}(0,0)$, so that $\eta\in L^p_{2+m,\ga}(T^*L)$. From the above computation and the fact that $F(\eta)=0$, we have $\tilde{H}_{\eta}(\eta)=-E(x,\eta,\na\eta,\na^2\eta)$, so that $\tilde{H}_{\eta}(\eta)\in L^p_{m,\ga}(\La^2T^*L)\op L^p_{m,\ga}(\La^3T^*L)$. By the regularity result above, we now have that $\eta\in L^p_{3+m,\ga}(T^*L)$. The result now follows simply from induction on $m$.
\end{proof}

\begin{prop}
Let $M^{\ga}_L$ denote the moduli space of nearby asymptotically cylindrical special Lagrangian deformations of $L$ with decay rate $\ga$ and asymptotic to $N\t\{p\}\t(R',\infty)$. Define $S:F^{-1}(0,0)\to\{3\text{-submanifolds of }X\}$ by $\eta\mapsto\Th(\Ga_{\eta})$ where $\Ga_{\eta}$ is the graph of $\eta$ in $B_{\ep'}(T^*L)$. Then $S$ is a homeomorphism of $F^{-1}(0,0)$ with a neighborhood of $L$ in $M^{\ga}_L$.
\end{prop}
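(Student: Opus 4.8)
The plan is to verify that $S$ is well-defined, injective, continuous, open, and surjective onto a neighborhood of $L$ in $\M_L^\ga$, treating each property in turn. For \emph{well-definedness}, given $\eta\in F^{-1}(0,0)$, Lemma \ref{smooth} shows $\eta\in L^p_{2+m,\ga}(T^*L)$ for all $m\ge 1$, so by the Weighted Sobolev Embedding Theorem \ref{embedding} $\eta$ is a smooth section; hence $\Ga_\eta$ is a genuine smooth graph inside $B_{\ep'}(T^*L)$ and $\Th(\Ga_\eta)=:\tilde L$ is a smooth $3$-submanifold of $X$. Because $F(\eta)=(0,0)$, by the discussion preceding Proposition \ref{Fmap} we have $\om|_{\tilde L}\equiv 0$ and $\Im\Om|_{\tilde L}\equiv 0$, so $\tilde L$ is special Lagrangian. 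To see $\tilde L$ is asymptotically cylindrical with decay rate $\ga$ and asymptotic to $N\t\{p\}\t(R',\iy)$: outside $K'$, $\Th\ci\eta=\Xi\ci\xi^*\eta$ by the defining property of $\Th$, so $\tilde L\sm\Th(\Ga_\eta|_{K'})$ is the image under $\Xi$ — hence after composing with $\Psi^{-1}$ and $\exp_N\t\io\t id$ — of the graph of $v+\xi^*\eta$; since $|\na^k v|=O(e^{\be t})$ and $\xi^*\eta\in L^p_{2+m,\ga}\hookrightarrow C^l_\ga$ for all $l$ with $\ga<0$, the normal vector field defining $\tilde L$ near infinity decays at rate $\ga$ (here we use $\ga<\be$ is \emph{not} needed for this direction — only $\be<0$ and $\ga<0$; in fact the slower rate $\be$ would persist from $v$, but since $\ga>\be$ the combined field still lies in $C^l_\ga$), so $\tilde L$ satisfies Definition \ref{acdfn4} with decay rate $\ga$. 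Thus $S(\eta)\in\M_L^\ga$.

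For \emph{injectivity}: if $\Th(\Ga_{\eta_1})=\Th(\Ga_{\eta_2})$ as submanifolds of $X$, then since $\Th:B_{\ep'}(T^*L)\to T_L$ is a diffeomorphism, $\Ga_{\eta_1}=\Ga_{\eta_2}$ inside $B_{\ep'}(T^*L)$, and since each $\Ga_{\eta_i}$ is a graph over $L$ this forces $\eta_1=\eta_2$. For \emph{continuity} of $S$ and \emph{openness onto its image}: the topology on $\M_L^\ga$ near $L$ is the one coming from $C^\iy$-convergence of submanifolds on compact sets together with control of the asymptotic normal field in the weighted norm; under the identification $\Th$, a nearby special Lagrangian $\tilde L'$ asymptotic to $N\t\{p\}\t(R',\iy)$ with decay rate $\ga$ that is $C^1_\ga$-close to $L$ is exactly the image $\Th(\Ga_{\eta'})$ of a unique small section $\eta'\in L^p_{2+l,\ga}(B_{\ep'}(T^*L))$ (unique because $\Ga_{\eta'}$ must be a graph once $\tilde L'$ is close enough), and $\tilde L'$ special Lagrangian $\iff F(\eta')=(0,0)$, i.e. $\eta'\in F^{-1}(0,0)$. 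This sets up the inverse map $S^{-1}(\tilde L')=\eta'$, and one checks that $C^1_\ga$-proximity of submanifolds corresponds to smallness of $\eta'$ in $C^1_\ga\supset L^p_{2+l,\ga}$, so $S$ and $S^{-1}$ are continuous for the subspace topology $F^{-1}(0,0)\subset L^p_{2+l,\ga}(T^*L)$; in particular $S$ maps the neighborhood $F^{-1}(0,0)$ onto a neighborhood of $L$ in $\M_L^\ga$, giving \emph{surjectivity} onto such a neighborhood and completing the proof that $S$ is a homeomorphism.

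The main obstacle I expect is the careful bookkeeping at infinity for well-definedness and for the identification underlying $S^{-1}$: one must confirm that the map $\Xi$ (built from $\Psi$, $\Phi$, $v$, and $\exp_N$) genuinely conjugates ``small $L^p_{2+l,\ga}$-section of $\nu_L$'' to ``asymptotically cylindrical deformation of the pair $(L,L_0)$ with decay rate $\ga$'' — i.e. that \emph{every} nearby special Lagrangian $\tilde L'$ with the prescribed asymptotics arises as $\Th(\Ga_{\eta'})$ for $\eta'$ in the \emph{weighted} space (not merely locally in $L^p_{2+l}$), which requires knowing $\tilde L'$ stays within the tubular neighborhood $T_L$ near infinity and that its defining normal field decays at rate $\ga$ precisely because $\tilde L'$ is asymptotic to $N\t\{p\}\t(R',\iy)$ with that rate. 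Granting the setup already established in the construction of $\Th$, $\Xi$ and $\xi$, this is a matter of unwinding Diagram \eq{aceq1} and the estimates $|\na^k v|=O(e^{\be t})$, $\be<\ga<0$; no genuinely new analytic input beyond Lemma \ref{smooth} and Theorem \ref{embedding} is needed.
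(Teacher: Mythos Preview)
Your argument for well-definedness and injectivity is essentially the paper's, and is fine. The genuine gap is in the surjectivity step, precisely at the point you flag as ``bookkeeping'' but then dismiss as needing no new analytic input.

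Given $\tilde L\in\M_L^\ga$ close to $L$, you do get a unique smooth section $\eta$ with $\Th(\Ga_\eta)=\tilde L$ and $F(\eta)=(0,0)$, and unwinding Diagram~\eq{aceq1} yields $\tilde v=v+\xi^*\Phi^*(\eta)$ with $|\na^k\tilde v|=O(e^{\ga t})$ and $|\na^k v|=O(e^{\be t})$, hence $|\na^k\eta|=O(e^{\ga\rho})$ for all $k$. But $C^k_\ga$ does \emph{not} embed into $L^p_{k,\ga}$: the borderline weight fails, and you only obtain $\eta\in L^p_{2+l,\ga'}(T^*L)$ for every $\ga'>\ga$. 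So at this stage you know $\eta\in F'^{-1}(0,0)$ for the map $F'$ built with decay rate $\ga'$, not that $\eta\in F^{-1}(0,0)$. The paper closes this gap with an argument that genuinely uses the Fredholm theory: choose $\ga'>\ga$ with $[\ga,\ga']\cap\D_{(\d+*\d^*)_0}=\emptyset$; then by Lemma~\ref{ker} the kernels $\ker((\d+*\d^*)^p_{2+l,\ga})$ and $\ker((\d+*\d^*)^p_{2+l,\ga'})$ coincide, so by the Implicit Function Theorem both $F^{-1}(0,0)$ and $F'^{-1}(0,0)$ are, near $0$, smooth manifolds locally modelled on the \emph{same} finite-dimensional space. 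Since $F^{-1}(0,0)\subseteq F'^{-1}(0,0)$, they agree near $0$, and hence $\eta\in F^{-1}(0,0)$ after all. This step is not mere unwinding of definitions; it requires the hypothesis that $\ga$ avoids the exceptional set $\D_{(\d+*\d^*)_0}$.

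A related issue is your topology argument. You assert that $C^1_\ga$-proximity of submanifolds is equivalent to $L^p_{2+l,\ga}$-smallness of $\eta$, but the inclusion $L^p_{2+l,\ga}\hookrightarrow C^1_\ga$ only goes one way, so continuity of $S^{-1}$ is not immediate from Sobolev embedding. The paper's resolution is again the finite-dimensionality of $F^{-1}(0,0)$: once $F^{-1}(0,0)$ is locally homeomorphic to a finite-dimensional vector space, \emph{all} Banach norms on sections induce the same topology on it, so whatever reasonable topology one places on $\M_L^\ga$, the map $S$ is a local homeomorphism.
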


\begin{proof}
Let $\eta\in F^{-1}(0,0)$, let $\tilde{L}=S(\eta)\subset T_L\subset X$. By the Lemma \ref{smooth}, $\tilde{L}$ is smooth; further, $$(0,0)=F(\eta)=((\Th\circ\eta)^*(-\om),(\Th\circ\eta)^* (\Im\Om))=(-\om|_{\tilde{L}},\Im\Om|_{\tilde{L}}),$$ which proves $\tilde{L}$ is a special Lagrangian $3$-submanifold of $X$.

We now appeal to Definition \ref{acdfn4} to prove that $\tilde{L}$ is asymptotically cylindrical with decay rate $\ga$. $\Th\circ\eta:L\to\tilde{L}$ is a diffeomorphism, so let $\tilde{K'}=(\Th\circ\eta)(K')$; then $\tilde{K'}$ is a compact subset of $\tilde{L}$. Let $\tilde{\Phi}=\Th\circ\eta\circ\Phi:N\t\{p\}\t(R',\infty)\to \tilde{L}\setminus\tilde{K'}$; then $\tilde{\Phi}$ is a diffeomorphism. Finally, notice $\eta\in T^*L\cong\nu_L$, so $\xi^*\circ\Phi^*(\eta)$ is a section of $\nu_N\t\{p\}\t(R',\infty)$, $\xi$ defined by (\ref{xi}); $v$ is also a section of $\nu_N\t\{p\}\t(R',\infty)$, so we can define $\tilde{v}=v+\xi^*\circ\Phi^*(\eta)\in\nu_N\t\{p\}\t(R',\infty)$. With this data, $\tilde{L},\tilde{K'},\tilde{\Phi},\tilde{v}$, Diagram (\ref{aceq1}) commutes, that is, $\Psi\circ\exp_{\tilde{v}}=\tilde{\Phi}$ on $N\t\{p\}\t(R',\infty).$

We need finally to show that $\tilde{L}$ has the correct decay rate. First, $|\na^kv|=O(e^{\ga t})$ for all $k\geq0$ on $N\t\{p\}\t(R',\infty)$ since $L$ is asymptotically cylindrical with decay rate $\be<\ga$. Next, by Lemma \ref{smooth}, $\eta\in L^p_{l+2,\ga}(T^*L)$ for all $l\geq 1$; by the Sobolev Embedding Theorem \ref{embedding}, $|\na^k\eta|=O(e^{\ga\rho})$ (see Definition \ref{wss}) for all $k\geq0$ on $L$, so $|\na^k(\xi^*\circ\Phi^*)(\eta)|=O(e^{\ga t})$ for all $k\geq 0$ on $N\t\{p\}\t(R',\infty)$ since $\xi$ and $\Phi$ are asymptotically cylindrical. Thus, $|\na^k\tilde{v}|=O(e^{\ga t})$ on $N\t\{p\}(R',\infty)$ for all $k\geq0$ on $N\t\{p\}\t(R',\infty)$, and so $\tilde{L}$ is asymptotically cylindrical special Lagrangian with decay rate $\ga$; hence, $S:F^{-1}(0,0)\to M^{\ga}_L$ is well defined.

Conversely, assume that $\tilde{L}$ is close to $L$ in $M^{\ga}_L$, and let $\tilde{K},\tilde{\Phi},\tilde{v}$ be as in Definition \ref{acdfn4} for $\tilde{L}$. Then there exists a unique smooth section $\eta$ of the bundle $B_{\ep'}(T^*L)$ with $\Th\circ\eta:L\to\tilde{L}$ a diffeomorphism since $\tilde{L}$ and $L$ are $C^1$ close; moreover, because $\om|_{\tilde{L}}\equiv0$ and $\Im\Om_{\tilde{L}}\equiv0$,
$F(\eta)=(0,0)$.

Now here again, as in the above construction, we have $\tilde{v}=v+\xi^*\circ\Phi^*(\eta)$, and since $|\na^k\tilde{v}|=O(e^{\ga t})$ and $|\na^k v|=O(e^{\ga t})$ for all $k\geq 0$, we have $|\na^k\eta|=O(e^{\ga\rho})$ for all $k\geq 0$ on $L$. Unfortunately, this estimate is only good enough to show $\eta\in L^p_{2+l,\ga'}(B_{\ep'}(T^*L)$ for any $\ga'>\ga$; however, if $F'$ denotes $F$ with the new decay rate $\ga'$, we now have $\eta\in F'^{-1}(0,0)$. If we further take $\ga'>\ga$ with $[\ga,\ga']\cap\D_{(\d+*\d^*)_0}=\emptyset$, then all of the above arguments apply also to $F'$, so that both $F,F'$ are smooth, finite-dimensional and locally isomorphic to $\ker((\d+*\d^*)^p_{2+l,\ga})$ and $\ker((\d+*\d^*)^p_{2+l,\ga'})$ respectively. Recall that these kernels depend only on the connected components of $\D_{(\d+*\d^*)_0}$ in which the decay rates lie, so in fact these kernels are equal. Since $F^{-1}(0,0)\subseteq F'^{-1}(0,0)$, we have that $F^{-1}(0,0)=F'^{-1}(0,0)$ near $0$ and, hence, $\eta\in F^{-1}(0,0)\subset L^p_{2+l,\ga}(B_{\ep'}(T^*L))$.

It is left to consider the topology. Since we have identified submanifolds of $X$ with sections of the cotangent bundle $T^*L$ of $L$, we have an induced topology on $M^{\ga}_L$ coming from some Banach norm on the sections $\eta$ of $T^*L$. Recall that $F^{-1}(0,0)$ with the topology defined by the $L^p_{2+l,\ga}$ Banach norm is locally homeomorphic to the finite-dimensional vector space $\ker((\d+*\d^*)^p_{2+l,\ga})$. This shows that all Banach norms on the sections $\eta$ of $T^*L$ will induce the same topology on $M^{\ga}_L$, so that $S$ is indeed a local homeomorphism.
\end{proof}

\end{document}